\newtheorem{theorem}{Theorem}[section]
\newtheorem{lemma}[theorem]{Lemma}
\theoremstyle{definition}
\theoremstyle{remark}
\newtheorem{remark}[theorem]{Remark}
\newtheorem{note}[theorem]{Note}
\newcommand{\be}{\begin{equation}}
\newcommand{\ee}{\end{equation}}
\begin{document}

\title[]{Barnsley-Navascu\'es fractal operators on Banach spaces on the Sierpi\'nski gasket}



\author{Asheesh Kumar Yadav}
\address{Department of Mathematics, Aligarh Muslim University, Aligarh, Uttar Pradesh, India, 202002}
\email{asheeshyadav7057@gmail.com}
\author{Himanshu Kumar}
\address{Department of Applied Sciences, IIIT Allahabad, Prayagraj, India, 211015}
\email{rss2021006@iiita.ac.in}
\author{Saurabh Verma}
\address{Department of Applied Sciences, IIIT Allahabad, Prayagraj, India, 211015}
\email{saurabhverma@iiita.ac.in}
\author{Bilel Selmi}
\address{Analysis, Probability \& Fractals Laboratory: LR18ES17,       
Faculty of Sciences of Monastir,   
Department of Mathematics, University of Monastir, 5000-Monastir
Tunisia}
\email{bilel.selmi@fsm.rnu.tn, \;bilel.selmi@isetgb.rnu.tn}



\subjclass[2010]{Primary 28A80, 47B38; Secondary 47A58}


 
\keywords{$\alpha$-fractal functions, Fractal operator, Fractal measure, Lebesgue spaces, Oscillation space, Sierpi\'nski Gasket, Energy space}

\begin{abstract}
In this article, we define fractal operators motivated by the works of Barnsley and  Navascu\'es on various function spaces such as energy space, Lebesgue space, and oscillation space on the well-known fractal domain Sierpi\'nski gasket. We further explore the properties of these operators from the perspectives of operator and approximation theory.
\end{abstract}

\maketitle



\section{Introduction}\label{section 1} 
Barnsley's work with Iterated Function System (IFS) brought forth the concept of fractal interpolation functions (FIFs) \cite{MF1}. Since then, there has been a consistent increase in interest regarding the application of FIFs for approximating functions. Navascu\'es \cite{NV1, NV2} presented a general framework for defining non-smooth versions of classical approximants of FIFs, widely known as $\alpha$-fractal functions. In view of her great contribution \cite{NV1, NV2, NV3, NVV2, Mnew2} to fractal functions and applications, we name the operator based on $\alpha$-fractal functions as Barnsley-Navascués fractal operators.
Motivated by the seminal work of Barnsley \cite{MF1}, Celik et al. \cite{B} devised a construction for the FIFs on $SG$. Their approach involved the consideration of a constant scaling factor and the selection of a height function that adheres to the properties of a harmonic function on $SG$. Ruan \cite{Ruan3} made a significant contribution by introducing FIFs on the p.c.f. fractals,  following the pioneering work of Kigami \cite{Kigami}.  Moreover, they derived a rigorous condition that ensures the finiteness of energy for linear FIFs. 
Ri and Ruan \cite{Ruan4} discussed uniform FIFs on $SG$, a special class defined by Celik et al. \cite{B}. They presented a necessary and sufficient condition for the uniform FIFs on $SG$ to have finite energy. In addition, they explored the properties of these operators. In a recent investigation by Sahu and Priyadarshi \cite{Ruan4}, the box dimension of the graph of FIFs, harmonic functions, and functions with finite energy on $SG$ was thoroughly examined. They successfully determined both the upper and lower bounds for this box dimension. Most recently, Verma and Sahu \cite{VS1} introduced the concept of a bounded variation function on $SG$. They further calculated the precise fractal dimension of the graph corresponding to continuous functions with a bounded variation on the $SG$. Megala and Prasad \cite{MP1,MP2,MP3} studied the spectrum of self-affine measures, which were also related to analytical properties of Lebesgue spaces. Several researchers have explored FIF from various perspectives. Some recent work on Hausdorff and other fractal dimensions and set-valued fractal functions with applications can be seen in \cite{AV1,AV2,VAM,VP1,VAD}. Interested readers may refer to the following works and the references therein \cite{ABP, AJ1, AJ2, MF3, SS2, HP2, NVV2, PV1, RSY, D} for fractal functions and their applications. 

The Sierpi\'nski gasket has a rich structure for analysis, allowing the study of approximation theory, spectral theory, and function spaces directly on a fractal.
Given a continuous function on $SG$, the $\alpha$-fractal operator maps it to another function that agrees at specific points but presents executed fractal behavior. This operator depends on a scaling function and a base function and is defined using self-referential equations derived from IFS \cite{MF2, H}. Several researchers have studied the geometric, analytic, and dimensional properties of these $\alpha$-fractal functions \cite{ ASV1, SS3, JV1, RBS, Mnew2}. Recent developments focus on understanding the behavior of fractal operators on function spaces, such as researchers use oscillation spaces to measure local variation and connect them closely with smoothness and H\"older continuity \cite{AC1, SS3, VS1}. Energy spaces, used in the context of resistance forms and Laplacians on fractals \cite{RBS, Kigami, RS}. $L_q$ spaces are important for the approximation, integrability, and variational analysis \cite{ GVS, HP2, JV1, NV3}.

Priyadrashi, Verma and their group \cite{MV1, Mverma1, Mverma2, Mverma3, VM, VS1}, Chandra and Abbas \cite{SS2,SS3,SS}, and Gurubachan et al. \cite{GVS, GVS2, GVS3} have provided detailed estimates and investigated the stability of $\alpha$-fractal functions within various function spaces. Their studies demonstrate that, under suitable conditions on the scaling function, the fractal operator remains bounded and continuous while preserving important properties such as norm bounds and smoothness levels. For recent developments on fractals and their applications, the reader is referred to \cite{Bandt, Cazassa, RBS, ADBJ, DS1, HP1, J1, PR1, MNS1, BS1, BS2} and the references therein.   

The fractal operator is also compatible with many other types of function spaces. For example, several authors have explored its action on Lebesgue spaces and Sobolev-type spaces on $SG$ \cite{ ES, KBV12}. Researchers have addressed approximation results using $\alpha$-fractal bases in \cite{NV1, NV2, VM}, established connections with differential equations on $SG$ in \cite{RBS, RS}, and applied these ideas to data modeling and real-world systems in \cite{GP1, KBV11}. In addition to analytical behavior, the dimensional aspects of these operators and their outputs, especially the box, Hausdorff, and generalized fractal dimensions, have been widely studied \cite{RS1, RHS1,  ASV1, SS, Fal, Fraser, RSP, YSL, YLC}.

In the article, we study the properties of fractal operators on function spaces defined on $SG$, such as bounds for their norms, fixed points, and bounded below. Furthermore, we have discussed compactness and Fredholmness with their index on these function spaces.
\par

This paper is assembled as follows. Section 2 represents the construction of FIFs on $SG$ and their properties, and also introduces Navascu\'es's $\alpha$-fractal function version of this FIF on $SG$. Section 3 collects our main results.
   
 \section{Fractal interpolation function on the Sierpi\'nski gasket}
We start with a brief description of the main ideas and a basic introduction to SG. The reader can look at \cite{Kigami, RS, Mverma1, VS1} for more details. Let $V_0=\{p_1,p_2,p_3\}$ be the vertices of an equilateral triangle on $\mathbb{R}^2$. Define the maps $L_i(x)=\frac{1}{2}( x + p_i),$ for $i=1,2,3$. These three maps are contraction maps of the plane and form an IFS. The Sierpi\'nski gasket is the attractor of this IFS:
$$ 
SG= L_1(SG) \cup L_2(SG) \cup L_3(SG).
$$
For a fixed $N \in \mathbb{N}$, define the composition $L_i=L_{i_1} \circ L_{i_2}\circ \dots \circ L_{i_N}$ for any sequence $ i=(i_1,i_2, \dots, i_N) \in I^N:=\{1,2,3\}^N$. The set of points we get by applying these maps to the initial set $V_0$ gives the  $N$-th level vertex set $V_N$ of $SG$. Now, let $B: V_N \to \mathbb{R}$ be a given function. We find an IFS whose attractor is the graph of a continuous function $f$ on $SG$ such that $f|_{V_N}=B$.

For each $w \in I^N$, define the map $W_{w} :SG \times \mathbb{R} \to SG \times \mathbb{R}$ by 
$$ 
W_{w}(x,z)=\Big(L_w(x),F_{w}(x,z)\Big).
$$
where $F_{w}(x,z): SG \times \mathbb{R} \to \mathbb{R}$ is satisfying the following conditions: 
$$ 
| F_{w}(x,z_1)-F_{w}(x,z_2)| \le c_{w} ~|z_1 - z_2|,
 $$ and
$ F_{w}(p_j,B(p_j))=B(L_w(p_j))$ for every $ j \in I$ with $c:= \max\limits_{w \in I^N} c_{w} < 1$. 

Accordingly, we define 
$$
F_{w}(x,z)= \alpha_{w}(x)z+q_{w}(x),
$$
where $\alpha_{w} : SG \to \mathbb{R} $ and $q_{w}: SG \to \mathbb{R}$ are continuous functions. These are chosen so that 
$$
q_w(p_i)=B(L_w(p_i))-\alpha_w(p_i)B(p_i),
$$ 
for each $i\in I$, and we also require $\|\alpha\|_{\infty} :=\max\{\|\alpha_{w} \|_{\infty}: w \in I^N\}<1, \|q\|_{\infty} :=\max\{\|q_{w} \|_{\infty}: w \in I^N\}$. 

Let $K=SG\times \mathbb{R}$.  Then the collection  $\mathcal{I}:=\{K; W_{w}: w \in I^N\}$ forms an IFS on $K$.
   
\begin{theorem}[\cite{Mverma1}]
Let $N \in \mathbb{N}$ and $B:V_N \to \mathbb{R}$ be given. The IFS $\mathcal{I}=\{K; W_{w}:w \in I^N\}$ defined above produces a unique continuous function $g_*: SG \to \mathbb{R}$ such that ${{g_*}|}_{V_N}=B$ and  satisfies the following self-referential equation
\begin{align*}
g_*(x)=\alpha_w(L_w^{-1}(x))g_*(L_w^{-1}(x))+q_w(L_w^{-1}(x))\; \forall\; x \in L_w(SG), w \in I^N. 
\end{align*}
\end{theorem}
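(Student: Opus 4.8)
The plan is to realize $g_*$ as the unique fixed point of a Read--Bajraktarević operator on a suitable complete metric space, rather than manipulating the set attractor of $\mathcal{I}$ directly. First I would introduce the space
\[
\mathcal{G} = \{h \in C(SG) : h|_{V_N} = B\},
\]
endowed with the uniform metric $d(h_1,h_2) = \|h_1 - h_2\|_\infty$. Since evaluation at each of the finitely many points of $V_N$ is continuous on $C(SG)$, the constraint $h|_{V_N} = B$ defines a closed subset, so $\mathcal{G}$ is a closed subspace of the complete space $(C(SG), \|\cdot\|_\infty)$ and is therefore itself complete.

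Next I would define the operator $T : \mathcal{G} \to \mathcal{G}$ cell by cell, setting
\[
(Th)(x) = \alpha_w(L_w^{-1}(x))\, h(L_w^{-1}(x)) + q_w(L_w^{-1}(x)), \qquad x \in L_w(SG),
\]
where $L_w^{-1} : L_w(SG) \to SG$ is the inverse of the bijection $L_w$. The first and most delicate point is well-definedness, since the cells $L_w(SG)$ cover $SG$ but overlap along the level-$N$ junction vertices, and I must check that the competing defining formulas agree there. At any vertex $L_w(p_i)$ one computes, using $h(p_i) = B(p_i)$ (valid because $V_0 \subseteq V_N$) together with the prescribed value $q_w(p_i) = B(L_w(p_i)) - \alpha_w(p_i)B(p_i)$, that
\[
(Th)(L_w(p_i)) = \alpha_w(p_i)\,B(p_i) + q_w(p_i) = B(L_w(p_i)),
\]
a quantity depending only on $B$, not on the cell index $w$ nor on $h$. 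Hence at every shared junction vertex the two formulas return the same value, so $Th$ is unambiguously defined; the same computation gives $(Th)|_{V_N} = B$, as $V_N = \{L_w(p_i) : w \in I^N,\ i \in I\}$.

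I would then establish continuity by the pasting lemma: on each closed cell $L_w(SG)$ the map $Th$ is continuous, being a composition and product of the continuous functions $\alpha_w$, $q_w$, $L_w^{-1}$ and $h$; since these finitely many pieces agree on the overlaps by the previous step and the closed cells have union $SG$, the glued function $Th$ is continuous on $SG$, so $T$ maps $\mathcal{G}$ into $\mathcal{G}$. For the contraction estimate, fix $h_1,h_2 \in \mathcal{G}$ and $x \in L_w(SG)$; the $q_w$-terms cancel, yielding
\[
|(Th_1)(x) - (Th_2)(x)| = |\alpha_w(L_w^{-1}(x))|\,|h_1(L_w^{-1}(x)) - h_2(L_w^{-1}(x))| \le \|\alpha\|_\infty\, \|h_1 - h_2\|_\infty.
\]
Taking the supremum over $x \in SG$ gives $d(Th_1,Th_2) \le \|\alpha\|_\infty\, d(h_1,h_2)$ with $\|\alpha\|_\infty < 1$, so $T$ is a contraction on the complete space $\mathcal{G}$.

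By the Banach fixed point theorem, $T$ has a unique fixed point $g_* \in \mathcal{G}$; being in $\mathcal{G}$ it is continuous and satisfies $g_*|_{V_N} = B$, while the fixed-point identity $Tg_* = g_*$ is precisely the stated self-referential equation. The main obstacle is the well-definedness and continuity of $T$ across cell boundaries, which is exactly where the interpolation hypotheses $F_w(p_j,B(p_j)) = B(L_w(p_j))$ and the prescribed formula for $q_w(p_i)$ are indispensable; once these compatibility conditions are secured, the contraction property and the uniqueness of $g_*$ follow routinely from $\|\alpha\|_\infty < 1$.
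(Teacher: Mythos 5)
Your proposal is correct and follows essentially the same route as the cited source \cite{Mverma1} (and as this paper's own proof of the analogous $\mathcal{L}_q$ result): restrict to the complete space of continuous functions interpolating $B$ on $V_N$, verify that the Read--Bajraktarevi\'c operator is well defined at the junction vertices via the compatibility conditions $q_w(p_i)=B(L_w(p_i))-\alpha_w(p_i)B(p_i)$, and apply the Banach fixed point theorem using $\|\alpha\|_{\infty}<1$. The only content of the theorem you do not address is the identification of the graph of $g_*$ with the set attractor of $\mathcal{I}$, but the statement's substantive claims---existence, uniqueness, interpolation, and the self-referential equation---are all established by your argument.
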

Navascu\'es \cite{NV1, NV2} invented the idea of $\alpha$-fractal functions, which are used in approximation theory and the construction of fractal Schauder bases in various function spaces. Following her work, for a given continuous function $f :SG \to \mathbb{R}$, if we consider the above functions $q_{w}: SG \to \mathbb{R}$ as 
$$q_w(x)=f(L_w(x))-\alpha_w(x)b(x),$$ where $b$ is a continuous function satisfying $b|_{V_0}=f|_{V_0}$. Then from the above theorem, there exists a unique continuous function $f^{\alpha}_{N,b}: SG \to \mathbb{R}$ such that $f^{\alpha}_{N,b}|_{V_N}=f|_{V_N}$ and  satisfies the following self-referential equation:
$$
f^{\alpha}_{N,b}(x)=f(x)+\alpha_w(L_w^{-1}(x))f^{\alpha}_{N,b}(L_w^{-1}(x))- \alpha_w(L_w^{-1}(x))b(L_w^{-1}(x))\; \forall\; x \in L_w(SG), w \in I^N.
$$
\section{Main Results}
\subsection{Oscillation Spaces}\label{Sec4}
First, we recall the oscillation spaces which were first introduced in \cite{ADBJ} and further, extended and studied in \cite{AC1, JV1, Mverma1, Ri2, VS1, VJN1}. These spaces help us to understand how a function behaves on smaller parts of the Sierpi\'nski Gasket. They measure how much a function varies or oscillates at each level of refinement. These spaces contain continuous functions, and we control their smoothness in a special way. Because of these properties, oscillation spaces are useful for analyzing functions with complex or irregular patterns.

Let $g: SG \rightarrow \mathbb{R}$ be a continuous function on $SG$. For any natural number $n$, define the total oscillation of order $n$ by 
$$
R(n,g)= \sum_{w \in \{1,2,3\}^n} R_g[L_w(SG)],
$$ 
where 
$$
R_g[L_w(SG)]=\sup\{|g(x_1)-g(x_2)|: x_1,x_2 \in L_w(SG)\}.
$$
Now, for $ 0< \beta \le \frac{\log 3}{\log2}$, define a new class of continuous functions on $SG$ as follows:
$$ 
\mathcal{C}^{\beta}(SG) := \left\{g:SG \rightarrow \mathbb{R}\bigg|~g \text{ ~is~ continuous ~and}~ \sup_{n \in \mathbb{N}} \frac{R(n,g)}{2^{n\left(\frac{\log3}{\log2}-\beta\right)}} < \infty\right\}.
$$
It is known (see \cite[Theorem 4]{Mverma1}) that the set $\mathcal{C}^{\beta}(SG)$, with the norm
$$
\|g\|_{\mathcal{C}^{\beta}}:= \|g\|_{\infty} + \sup_{n \in \mathbb{N} }\frac{ R(n,g)}{2^{n\left(\frac{\log3}{\log2}-\beta\right)}},
$$
is a Banach space.

    

For a given $f \in \mathcal{C}^{\beta}(SG)$.define a function $q_{w}: SG \to \mathbb{R}$ as 
$$
q_w(x)=f(L_w(x))-\alpha_w(x)b(x) \quad \text{ for all }  w \in I^N,
$$ 
where $b \in \mathcal{C}^{\beta}(SG)$ is a base function that agrees with $f$on the set $V_0$, i.e., $b|_{V_0}=f|_{V_0}$. Using item $3$ of \cite[Lemma 1]{Mverma1}, this function $q_w$ also belongs to $\mathcal{C}^{\beta}(SG)$. Now, suppose the following condition holds: 
$$
\max \Bigg\{\| \alpha \|_{\infty} +\frac{3^n}{2^{n\left(\frac{\log3}{\log2}-\beta\right)}}\sup_{w \in I^n} \sup_{m \in \mathbb{N}}\frac{R(m,\alpha_{w})}{2^{m\left(\frac{\log3}{\log2}-\beta\right)}},\frac{3^n \|\alpha\|_{\infty}}{2^{n\left(\frac{\log3}{\log2}-\beta\right)}} \Bigg\}<1.
$$ 
Then, by \cite[Theorem 5]{Mverma1}, there exists a unique function $f^{\alpha}_{N,b} \in \mathcal{C}^{\beta}(SG)$ such that:
\begin{itemize}
\item $f^{\alpha}_{N,b}$ agrees with $f$ on the vertex set $V_N$, i.e., $f^{\alpha}_{N,b}|_{V_N}=f|_{V_N}$, and 
\item it satisfies the self-referential equation:
$$
f^{\alpha}_{N,b}(x)=f(x)+\alpha_w(L_w^{-1}(x))f^{\alpha}_{N,b}(L_w^{-1}(x))- \alpha_w(L_w^{-1}(x))b(L_w^{-1}(x))\; 
$$
\end{itemize}
for all $x \in L_w(SG)$, and for all $w \in I^N$.

Now, let $T:\mathcal{C}^{\beta}(SG) \to \mathcal{C}^{\beta}(SG)$ be a bounded linear operator such that $(Tg)|_{V_0}=g|_{V_0}$ for any $g \in \mathcal{C}^{\beta}(SG)$. if we consider the base function $b=Tf$, then we can define a map $\mathcal{F}^{\alpha}_{N,T}: \mathcal{C}^{\beta}(SG) \to \mathcal{C}^{\beta}(SG)$ by 
$$
\mathcal{F}^{\alpha}_{N,T}(f)=f^{\alpha}_{N,b}=f^{\alpha}_{N,T}.
$$
This map $\mathcal{F}^{\alpha}_{N,T}$ is a bounded linear operator. Following the terminology of Navascu\'es, we call this operator a fractal operator on $\mathcal{C}^{\beta}(SG)$.

Now, let us explore the various properties of this operator.

\begin{lemma} \cite[Lemma 1]{Cazassa}\label{Lemma1} Let $(X,\|.\|)$ be a Banach space, $T: X \rightarrow X$ be a linear operator. Suppose there exist constants $\lambda_1, \lambda_2 \in[0,1)$ such that
$$
\|T x-x\| \leq \lambda_1\|x\|+\lambda_2\|T x\|, \quad \forall x \in X .
$$
Then $T$ is a topological isomorphism, and
$$
\frac{1-\lambda_2}{1+\lambda_1}\|x\| \leq\left\|T^{-1} x\right\| \leq \frac{1+\lambda_2}{1-\lambda_1}\|x\|, \quad \forall x \in X .
$$
\end{lemma}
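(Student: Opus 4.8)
The plan is to convert the single mixed hypothesis into a clean two-sided comparison between $\|x\|$ and $\|Tx\|$, which instantly gives boundedness and injectivity of $T$, and then to upgrade injectivity to full invertibility by a homotopy (\emph{method of continuity}) that connects $T$ to the identity. The norm estimates on $T^{-1}$ will fall out at the end by substituting $x=T^{-1}y$ into the comparison.

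First I would rearrange the hypothesis using only the triangle inequality. From $\|Tx\|\le \|Tx-x\|+\|x\|\le \lambda_1\|x\|+\lambda_2\|Tx\|+\|x\|$ and from $\|x\|\le \|Tx-x\|+\|Tx\|\le \lambda_1\|x\|+\lambda_2\|Tx\|+\|Tx\|$, solving each inequality for the relevant norm (legitimate since $\lambda_1,\lambda_2<1$) yields
\[
\frac{1-\lambda_1}{1+\lambda_2}\,\|x\|\;\le\;\|Tx\|\;\le\;\frac{1+\lambda_1}{1-\lambda_2}\,\|x\|,\qquad \forall x\in X.
\]
The right inequality shows $T$ is bounded; the left one shows $T$ is bounded below, hence injective with closed range, and gives $\|T^{-1}\|\le \frac{1+\lambda_2}{1-\lambda_1}$ on the range of $T$.

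The main obstacle is surjectivity, since being bounded below does not imply onto in a general Banach space (an isometry need not be surjective), so the specific structure must be exploited. I would introduce the homotopy $L_t=(1-t)I+tT$ for $t\in[0,1]$, with $L_0=I$ and $L_1=T$, and prove the key claim that every $L_t$ satisfies an inequality of exactly the same shape, with constants still in $[0,1)$ and uniformly controlled. Concretely, for $t\in(0,1]$ one has $Tx=t^{-1}\big(L_tx-(1-t)x\big)$, and substituting this into $\|L_tx-x\|=t\|Tx-x\|\le t\lambda_1\|x\|+t\lambda_2\|Tx\|$ gives
\[
\|L_tx-x\|\;\le\;\big(t\lambda_1+(1-t)\lambda_2\big)\|x\|+\lambda_2\|L_tx\|.
\]
Since $t\lambda_1+(1-t)\lambda_2$ is a convex combination of $\lambda_1$ and $\lambda_2$, it lies in $[0,\max(\lambda_1,\lambda_2)]\subset[0,1)$, while the coefficient of $\|L_tx\|$ is just $\lambda_2<1$. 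Feeding this back through the first step produces the uniform lower bound $\|L_tx\|\ge \frac{1-\max(\lambda_1,\lambda_2)}{1+\lambda_2}\|x\|=:c\|x\|$ with $c>0$ independent of $t$ (the bound is trivial at $t=0$, where $L_0=I$). I expect this homotopy-invariance of the structural inequality to be the crux of the whole argument.

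Finally, the map $t\mapsto L_t$ is norm-continuous ($L_t-L_s=(t-s)(T-I)$ with $T-I$ bounded) and every $L_t$ is bounded below by the same $c$. Thus whenever $L_\tau$ is onto it is bijective with $\|L_\tau^{-1}\|\le 1/c$, and for each $t$ with $|t-\tau|$ below a threshold depending only on $c$ and $\|I\|+\|T\|$ the equation $L_tx=y$ can be solved by a Banach fixed-point argument (writing $x=L_\tau^{-1}\big(y-(L_t-L_\tau)x\big)$, a contraction), so $L_t$ is onto as well. Because this step length is uniform, finitely many steps carry surjectivity from $L_0=I$ to $L_1=T$. Combined with injectivity and $\|T^{-1}\|\le 1/c$, this makes $T$ a topological isomorphism. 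Substituting $x=T^{-1}y$ into the two-sided estimate from the first step then delivers the asserted bounds $\frac{1-\lambda_2}{1+\lambda_1}\|y\|\le \|T^{-1}y\|\le \frac{1+\lambda_2}{1-\lambda_1}\|y\|$ for all $y\in X$. The remaining computations are routine rearrangements; only the homotopy argument carries real content.
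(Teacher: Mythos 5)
Your proof is correct, and there is nothing in the paper to compare it against: the paper does not prove this lemma at all, but imports it verbatim from the cited reference \cite[Lemma 1]{Cazassa}. Your argument is, in substance, the standard proof of that cited result (going back to Hilding's lemma): the triangle-inequality rearrangement giving $\frac{1-\lambda_1}{1+\lambda_2}\|x\|\le\|Tx\|\le\frac{1+\lambda_1}{1-\lambda_2}\|x\|$ yields boundedness, injectivity and closed range, and you correctly identified the real obstacle — a Neumann series on $I-T$ fails because the hypothesis only gives $\|Tx-x\|\le\frac{\lambda_1+\lambda_2}{1-\lambda_2}\|x\|$, a constant that can exceed $1$, so surjectivity genuinely needs the method of continuity. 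Your key computation is right: writing $Tx=t^{-1}(L_tx-(1-t)x)$ shows each $L_t=(1-t)I+tT$ satisfies the same-shape inequality with constants $\bigl(t\lambda_1+(1-t)\lambda_2,\,\lambda_2\bigr)$, both still in $[0,1)$, whence the uniform lower bound $\|L_tx\|\ge c\|x\|$ with $c=\frac{1-\max(\lambda_1,\lambda_2)}{1+\lambda_2}>0$; the contraction step $x\mapsto L_\tau^{-1}\bigl(y-(L_t-L_\tau)x\bigr)$ with uniform step length then propagates surjectivity from $L_0=I$ to $L_1=T$, and substituting $x=T^{-1}y$ into the two-sided estimate gives exactly the stated bounds on $\|T^{-1}\|$. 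This is a complete, self-contained proof of the lemma the paper only quotes.
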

\begin{theorem}\label{thm32}
    Let $Id$ be the identity operator on $\mathcal{C}^{\beta}(S G)$. Denote $$\|\alpha\|_{\infty}=\max \left\{\left\|\alpha_w\right\|_{\infty}: w\in I^N\right\}~\text{and}~\|\alpha\|_{\mathcal{C}^{\beta}}=\max \left\{\left\|\alpha_w\right\|_{\mathcal{C}^{\beta}}: w\in I^N\right\}.$$ If $\left\|\alpha\right\|_{\infty}+\left\|\alpha\right\|_{\mathcal{C}^{\beta}}<\frac{1}{2^{N\beta}}$, then the following statements hold:
    \begin{enumerate}
        \item\label{item1}  Let $f \in \mathcal{C}^{\beta}(S G)$ be an arbitrary. Then the perturbation error is of the form:
$$
\left\|f^\alpha_{N,T}-f\right\|_{\mathcal{C}^{\beta}} \leq \frac{2^{N\beta}(\|\alpha\|_{\infty}+\left\|\alpha\right\|_{\mathcal{C}^{\beta}})}{1- 2^{N\beta}(\left\|\alpha\right\|_{\infty}+\left\|\alpha\right\|_{\mathcal{C}^{\beta}})}\|f-T f\|_{\mathcal{C}^{\beta}} .
$$
Note that, for $\|\alpha\|_{\mathcal{C}^{\beta}}=0$ or $T=I$, we have $\mathcal{F}^\alpha_{N,T}= Id.$
\item Under the uniform norm on $\mathcal{C}^{\beta}(S G)$, the fractal operator $\mathcal{F}^\alpha_{N,T}$ is a bounded linear operator. Moreover, the operator norm holds
$$
\left\|\mathcal{F}^\alpha_{N,T}\right\| \leq 1+\frac{2^{N\beta}(\|\alpha\|_{\infty}+\left\|\alpha\right\|_{\mathcal{C}^{\beta}})\|I d-T\|}{1-2^{N\beta}(\left\|\alpha\right\|_{\infty}+\left\|\alpha\right\|_{\mathcal{C}^{\beta}})} .
$$
\item If $(\left\|\alpha\right\|_{\infty}+\left\|\alpha\right\|_{\mathcal{C}^{\beta}})\|T\|<\frac{1}{2^{N\beta}}$, then $\mathcal{F}^\alpha_{N,T}$ is bounded below. In particular, $\mathcal{F}^\alpha_{N,T}$ is injective.
\item\label{item4} If $(\left\|\alpha\right\|_{\infty}+\left\|\alpha\right\|_{\mathcal{C}^{\beta}})\|T\|<\frac{1}{2^{N\beta}}$, then the inverse of $\mathcal{F}^\alpha_{N,T}$ exists which is also bounded and therefore a topological automorphism. Furthermore,
$$
\frac{1-(\left\|\alpha\right\|_{\infty}+\left\|\alpha\right\|_{\mathcal{C}^{\beta}})}{1+(\left\|\alpha\right\|_{\infty}+\left\|\alpha\right\|_{\mathcal{C}^{\beta}})\|T\|}\leq\left\|\left(\mathcal{F}^\alpha_{N,T}\right)^{-1}\right\| \leq \frac{1+(\left\|\alpha\right\|_{\infty}+\left\|\alpha\right\|_{\mathcal{C}^{\beta}})}{1-(\left\|\alpha\right\|_{\infty}+\left\|\alpha\right\|_{\mathcal{C}^{\beta}})\|T\|}.
$$
\item\label{item5} If $\|\alpha\|_{\mathcal{C}^{\beta}} \neq 0$ then the fixed points of $T$ are also the the fixed points of $\mathcal{F}^\alpha_{N,T}$. If at least one $\alpha_{\omega}$ is nowhere zero, then the fixed points of $\mathcal{F}^\alpha_{N,T}$ are also the fixed points of $T$.
\item\label{item6} If the point spectrum of $T$ contains $1$, then $\left\|\mathcal{F}^\alpha_{N,T}\right\|_{ \mathcal{C}^{\beta}}\geq 1$.
\item The fractal operator $\mathcal{F}^\alpha_{N,T}$ is not the compact operator in the case of $(\left\|\alpha\right\|_{\infty}+\left\|\alpha\right\|_{\mathcal{C}^{\beta}})\|T\|<\frac{1}{2^{N\beta}}$.
\item If $(\left\|\alpha\right\|_{\infty}+\left\|\alpha\right\|_{\mathcal{C}^{\beta}})\|T\|<\frac{1}{2^{N\beta}}$, then $\mathcal{F}^\alpha_{N,T}$ is Fredholm and its index is $0.$
  \end{enumerate}
\end{theorem}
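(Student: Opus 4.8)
The plan is to obtain item (8) as a purely formal consequence of the invertibility already recorded in item (4), which I take as given. Write $\rho := \|\alpha\|_{\infty} + \|\alpha\|_{\mathcal{C}^{\beta}}$, so that the hypothesis of (8) reads $\rho\,\|T\| < 2^{-N\beta}$. The mechanism underlying (4) is the self-referential identity for $f^{\alpha}_{N,T}$: letting $\Phi$ denote the \emph{fractalization operator} that sends a continuous $h$ to the function equal to $(\alpha_w\circ L_w^{-1})\cdot(h\circ L_w^{-1})$ on each cell $L_w(SG)$, the self-referential equation rearranges to $(Id-\Phi)\,\mathcal{F}^{\alpha}_{N,T} = Id - \Phi T$. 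Since the oscillation-space estimates of \cite{Mverma1} yield $\|\Phi\|_{\mathcal{C}^{\beta}} \le 2^{N\beta}\rho < 1$ (using the overarching hypothesis $\rho < 2^{-N\beta}$) and $\|\Phi T\|_{\mathcal{C}^{\beta}} \le 2^{N\beta}\rho\,\|T\| < 1$, both $Id-\Phi$ and $Id-\Phi T$ are invertible by Neumann series, so $\mathcal{F}^{\alpha}_{N,T} = (Id-\Phi)^{-1}(Id-\Phi T)$ is a topological automorphism of $\mathcal{C}^{\beta}(SG)$.

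With invertibility in hand, I would conclude (8) immediately, since a bounded linear bijection of a Banach space onto itself with bounded inverse is Fredholm of index $0$. Concretely: injectivity gives $\ker\mathcal{F}^{\alpha}_{N,T} = \{0\}$, so the kernel is finite dimensional with $\dim\ker\mathcal{F}^{\alpha}_{N,T} = 0$; surjectivity gives $\operatorname{ran}\mathcal{F}^{\alpha}_{N,T} = \mathcal{C}^{\beta}(SG)$, which is closed, and the cokernel $\mathcal{C}^{\beta}(SG)/\operatorname{ran}\mathcal{F}^{\alpha}_{N,T}$ is trivial, so $\dim\operatorname{coker}\mathcal{F}^{\alpha}_{N,T} = 0$. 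These are exactly the three defining properties of a Fredholm operator, and
$$
\operatorname{ind}\big(\mathcal{F}^{\alpha}_{N,T}\big) = \dim\ker\mathcal{F}^{\alpha}_{N,T} - \dim\operatorname{coker}\mathcal{F}^{\alpha}_{N,T} = 0 - 0 = 0 .
$$

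The main point to stress is that \emph{no genuine obstacle remains at this stage}: all the analytic content is front-loaded into the bound $\|\Phi\|_{\mathcal{C}^{\beta}} \le 2^{N\beta}\rho$ and the resulting invertibility (item (4), via Lemma \ref{Lemma1}), after which Fredholmness and the vanishing of the index are formal. As a robustness check that avoids naming the inverse, the same conclusion will follow from homotopy invariance of the index: replacing $\alpha$ by $t\alpha$ for $t\in[0,1]$ only shrinks $\rho$ to $t\rho$, so $t\rho\,\|T\| < 2^{-N\beta}$ throughout, and (4) makes $\{\mathcal{F}^{t\alpha}_{N,T}\}_{t\in[0,1]}$ a norm-continuous path of invertible (hence Fredholm) operators joining $\mathcal{F}^{\alpha}_{N,T}$ at $t=1$ to $\mathcal{F}^{0}_{N,T} = Id$ at $t=0$, whence $\operatorname{ind}(\mathcal{F}^{\alpha}_{N,T}) = \operatorname{ind}(Id) = 0$. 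The one thing I would be careful to avoid is routing the argument through a compact-perturbation-of-the-identity principle: item (7) shows $\mathcal{F}^{\alpha}_{N,T}$ is not compact, and $\mathcal{F}^{\alpha}_{N,T}-Id$ need not be compact either, so the clean path is invertibility rather than stability of the index under compact perturbations.
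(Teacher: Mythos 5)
Your proof of item (8) is correct and takes essentially the same route as the paper: both arguments reduce (8) to the invertibility established in item (4) and then observe that a bounded bijection of a Banach space onto itself is Fredholm with $\dim\ker = \dim\operatorname{coker} = 0$, hence of index $0$ (the paper counts the cokernel via invertibility of the adjoint, you count it directly as the quotient by the range --- the same formal step). One caveat about your side remark only: the cell-wise operator $\Phi$ is not well defined on all of $\mathcal{C}^{\beta}(SG)$, since at a junction point $x = L_w(p_i) = L_{w'}(p_j)$ the two prescriptions $\alpha_w(p_i)h(p_i)$ and $\alpha_{w'}(p_j)h(p_j)$ need not agree unless $h$ vanishes on $V_0$, so the factorization $(Id-\Phi)\mathcal{F}^{\alpha}_{N,T}=Id-\Phi T$ (and the norm-continuity of the homotopy $t\mapsto\mathcal{F}^{t\alpha}_{N,T}$ built from it) is only heuristic as written; this does not create a gap, because you invoke item (4) as given rather than relying on that factorization.
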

\begin{proof}\noindent
\begin{enumerate}
\item  It follows from the self-referential equation that 
$$
f^{\alpha}_{N,b}(x)-f(x)=\alpha_w(L_w^{-1}(x))f^{\alpha}_{N,b}(L_w^{-1}(x))- \alpha_w(L_w^{-1}(x))b(L_w^{-1}(x))\; \forall\; x \in L_w(SG), w \in I^N.
$$
We have 
$$
\left\|f^\alpha_{N,T}-f\right\|_{\mathcal{C}^{\beta}}= \|f^\alpha_{N,T}-f\|_{\infty} + \sup_{n \in \mathbb{N} }\frac{ R(n,f^\alpha_{N,T}-f)}{2^{n\left(\frac{\log3}{\log2}-\beta\right)}}.
$$
and 
\begin{eqnarray}
\left|f^\alpha_{N,T}(x)-f(x)\right|
&=&\left|\alpha_w(L_w^{-1}(x))f^{\alpha}_{N,T}(L_w^{-1}(x))- \alpha_w(L_w^{-1}(x))b(L_w^{-1}(x))\right|\nonumber\\
&=& \left|\alpha_w(L_w^{-1}(x))\right|\left|f^{\alpha}_{N,T}(L_w^{-1}(x))-b(L_w^{-1}(x))\right|\nonumber\\
&\leq& \left\|\alpha\right\|_{\infty}\left\|f^{\alpha}_{N,T}-b\right\|_{\infty}.\label{eq0}
\end{eqnarray}
Which implies that 
\begin{eqnarray}
 \left\|f^\alpha_{N,T}-f\right\|_{\infty}&\leq&\left\|\alpha\right\|_{\infty}\left\|f^{\alpha}_{N,T}-b\right\|_{\infty}\\&\leq&\left\|\alpha\right\|_{\infty}\left\|f^{\alpha}_{N,T}-f\right\|_{\infty}+\left\|\alpha\right\|_{\infty}\left\|f-b\right\|_{\infty}.
\end{eqnarray}
Now, we can deduce that 
\begin{eqnarray}\label{eq1}
    \left\|f^\alpha_{N,T}-f\right\|_{\infty}\leq\frac{\left\|\alpha\right\|_{\infty}}{1-\left\|\alpha\right\|_{\infty}}\left\|f-T f\right\|_{\infty}.
\end{eqnarray}
For $n>N$, we have
\begin{eqnarray*}
&& R(n,f^\alpha_{N,T}-f)\\&=&\sum_{\sigma \in \{1,2,3\}^n}\sup\Big\{|(f^\alpha_{N,T}-f)(L_{\sigma}(x_1))-(f^\alpha_{N,T}-f)(L_{\sigma}(x_2))|: x_1,x_2 \in SG\Big\}\\&=&\sum_{w \in \{1,2,3\}^N, \tau\in \{1,2,3\}^{n-N}}\sup\Big\{|(f^\alpha_{N,T}-f)(L_{w}(L_{\tau}(x_1)))-(f^\alpha_{N,T}-f)(L_{w}(L_{\tau}(x_2)))|: x_1,x_2 \in SG\Big\}\\&=&\sum_{w \in \{1,2,3\}^N, \tau\in \{1,2,3\}^{n-N}}\sup_{x_1,x_2 \in SG}\Big\{|\alpha_w(L_{\tau}(x_1))((f^{\alpha}_{N,T}-b)(L_{\tau}(x_1))-\alpha_w(L_{\tau}(x_2))((f^{\alpha}_{N,T}-b)(L_{\tau}(x_2))|\Big\}\\
 &\leq& 3^{N}\left\|\alpha\right\|_{\infty}\sum_{\tau  \in \{1,2,3\}^{n-N}}\sup\Big\{|((f^{\alpha}_{N,T}-b)(L_{\tau}(x_1))-((f^{\alpha}_{N,T}-b)(L_{\tau}(x_2))|: x_1,x_2 \in SG\Big\}\\&+&\left\|f^{\alpha}_{N,T}-b\right\|_{\infty}\sum_{w \in \{1,2,3\}^N, \tau\in \{1,2,3\}^{n-N}}\sup\Big\{|(\alpha_w(L_{\tau}(x_1))-(\alpha_w(L_{\tau}(x_2))|: x_1,x_2 \in SG\Big\}\\
 &\leq& 3^{N}\left\|\alpha\right\|_{\infty} \Biggl(\sum_{\tau \in \{1,2,3\}^{n-N}}\sup\Big\{|((f^{\alpha}_{N,T}-f)(L_{\tau}(x_1))-((f^{\alpha}_{N,T}-f)(L_{\tau}(x_2))|: x_1,x_2 \in SG\Big\}\\&+&\sum_{\tau \in \{1,2,3\}^{n-N}}\sup\Big\{|((f-b)(L_{\tau}(x_1))-((f-b)(L_{\tau}(x_2))|: x_1,x_2 \in SG\Big\}\Biggl)\\&+&\left\|f^{\alpha}_{N,T}-b\right\|_{\infty}\sum_{w\in \{1,2,3\}^{N}}R(n-N,\alpha_w)\\
 &\leq& 3^{N}\left\|\alpha\right\|_{\infty}\Biggl(R(n-N,f^\alpha_{N,T}-f)+R(n-N,f-b)\Biggl)+\left\|f^{\alpha}_{N,T}-b\right\|_{\infty}\sum_{w\in \{1,2,3\}^{N}}R(n-N,\alpha_w).
 \end{eqnarray*}
 This implies that 
 \begin{align*}
  &\frac{R(n,f^\alpha_{N,T}-f)}{2^{n\left(\frac{\log3}{\log2}-\beta\right)}}\\& \leq 3^{N}\left\|\alpha\right\|_{\infty}\frac{R(n-N,f^\alpha_{N,T}-f)+R(n-N,f-b)}{2^{n\left(\frac{\log3}{\log2}-\beta\right)}}+\left\|f^{\alpha}_{N,T}-b\right\|_{\infty}\sum_{w\in \{1,2,3\}^{N}} \frac{R(n-N,\alpha_w)}{2^{n\left(\frac{\log3}{\log2}-\beta\right)}}\\&=\frac{3^{N}\left\|\alpha\right\|_{\infty}}{2^{N\left(\frac{\log3}{\log2}-\beta\right)}}\frac{R(n-N,f^\alpha_{N,T}-f)+R(n-N,f-b)}{2^{(n-N)\left(\frac{\log3}{\log2}-\beta\right)}}+\frac{\left\|f^{\alpha}_{N,T}-b\right\|_{\infty}}{2^{N\left(\frac{\log3}{\log2}-\beta\right)}}\sum_{w\in \{1,2,3\}^{N}} \frac{R(n-N,\alpha_w)}{2^{(n-N)\left(\frac{\log3}{\log2}-\beta\right)}}
 \end{align*}
Thus, we have  
\begin{eqnarray*}\label{eq2}
    \sup_{n \in \mathbb{N} }\frac{ R(n,f^\alpha_{N,T}-f)}{2^{n\left(\frac{\log3}{\log2}-\beta\right)}}\leq \frac{3^{N}\left\|\alpha\right\|_{\infty}}{\mathcal{A}} \sup_{n \in \mathbb{N} }\frac{ R(n,f-T f)}{2^{n\left(\frac{\log3}{\log2}-\beta\right)}}+\frac{\left\|f^{\alpha}_{N,T}-T f\right\|_{\infty}}{\mathcal{A}}\sum_{w\in \{1,2,3\}^{N}}\sup_{n \in \mathbb{N} }\frac{ R(n,\alpha_w)}{2^{n\left(\frac{\log3}{\log2}-\beta\right)}},
\end{eqnarray*}
where $\mathcal{A}=2^{N\left(\frac{\log3}{\log2}-\beta\right)}.$
It follows that
\begin{align*}
   & \left\|f^\alpha_{N,T}-f\right\|_{\mathcal{C}^{\beta}} \\&\leq \max\left\{\frac{\left\|\alpha\right\|_{\infty}}{1-\left\|\alpha\right\|_{\infty}}, \frac{3^{N}\left\|\alpha\right\|_{\infty}}{\mathcal{A}}\right\} \left(\left\|f-T f\right\|_{\infty}+\sup_{n \in \mathbb{N} }\frac{ R(n,f-T f)}{2^{n\left(\frac{\log3}{\log2}-\beta\right)}}\right)\\&+\frac{\left\|f^{\alpha}_{N,T}-T f\right\|_{\infty}}{\mathcal{A}}\sum_{w\in \{1,2,3\}^{N}}\sup_{n \in \mathbb{N} }\frac{ R(n,\alpha_w)}{2^{n\left(\frac{\log3}{\log2}-\beta\right)}}\\
    &\leq  \max\left\{\frac{\left\|\alpha\right\|_{\infty}}{1-\left\|\alpha\right\|_{\infty}}, \frac{2^{N\beta}\left\|\alpha\right\|_{\infty}}{1-2^{N\beta}\left\|\alpha\right\|_{\infty}}\right\}\|f-T f\|_{\mathcal{C}^{\beta}}+\frac{2^{N\beta}\|\alpha\|_{\mathcal{C}^{\beta}}}{1-2^{N\beta}\left\|\alpha\right\|_{\infty}} \left\|f^{\alpha}_{N,T}-T f\right\|_{\mathcal{C}^{\beta}}\\&=  \frac{2^{N\beta}\left\|\alpha\right\|_{\infty}}{1-2^{N\beta}\left\|\alpha\right\|_{\infty}}\|f-T f\|_{\mathcal{C}^{\beta}}+\frac{2^{N\beta}\|\alpha\|_{\mathcal{C}^{\beta}}}{1-2^{N\beta}\left\|\alpha\right\|_{\infty}} \left\|f^{\alpha}_{N,T}-T f\right\|_{\mathcal{C}^{\beta}}.
\end{align*}
This implies that 
\begin{align*}
   \left\|f^\alpha_{N,T}-f\right\|_{\mathcal{C}^{\beta}}\leq  \frac{2^{N\beta}(\left\|\alpha\right\|_{\infty}+\left\|\alpha\right\|_{\mathcal{C}^{\beta}})}{1- 2^{N\beta}( \left\|\alpha\right\|_{\infty}+\left\|\alpha\right\|_{\mathcal{C}^{\beta}})}\|f-T f\|_{\mathcal{C}^{\beta}}
\end{align*}
\item By using the first assertion, we have
$$
\left\|f^\alpha_{N,T}\right\|_{\mathcal{C}^{\beta}}-\left\|f\right\|_{\mathcal{C}^{\beta}}\leq\left\|f^\alpha_{N,T}-f\right\|_{\mathcal{C}^{\beta}}\leq\frac{2^{N\beta}(\left\|\alpha\right\|_{\infty}+\left\|\alpha\right\|_{\mathcal{C}^{\beta}})}{1- 2^{N\beta}( \left\|\alpha\right\|_{\infty}+\left\|\alpha\right\|_{\mathcal{C}^{\beta}})}\|f-T f\|_{\mathcal{C}^{\beta}}.
$$
or
$$
\left\|\mathcal{F}^{\alpha}_{N,T}(f)\right\|_{\mathcal{C}^{\beta}}-\left\|f\right\|_{\mathcal{C}^{\beta}}\leq\left\|f^\alpha_{N,T}-f\right\|_{\mathcal{C}^{\beta}}\leq\frac{2^{N\beta}(\left\|\alpha\right\|_{\infty}+\left\|\alpha\right\|_{\mathcal{C}^{\beta}})}{1- 2^{N\beta}( \left\|\alpha\right\|_{\infty}+\left\|\alpha\right\|_{\mathcal{C}^{\beta}})}\left\|f-T f\right\|_{\mathcal{C}^{\beta}}.
$$
So, 
$$
\left\|\mathcal{F}^{\alpha}_{N,T}(f)\right\|_{\mathcal{C}^{\beta}}\leq\left[1+\frac{2^{N\beta}(\left\|\alpha\right\|_{\infty}+\left\|\alpha\right\|_{\mathcal{C}^{\beta}})\left\|Id-T \right\|}{1-2^{N\beta}(\left\|\alpha\right\|_{\infty}+\left\|\alpha\right\|_{\mathcal{C}^{\beta}})}\right]\left\|f \right\|_{\mathcal{C}^{\beta}}.
$$
Which implies that 
$$
\left\|\mathcal{F}^{\alpha}_{N,T}\right\|\leq1+\frac{2^{N\beta}(\left\|\alpha\right\|_{\infty}+\left\|\alpha\right\|_{\mathcal{C}^{\beta}})\left\|Id-T \right\|}{1-2^{N\beta}(\left\|\alpha\right\|_{\infty}+\left\|\alpha\right\|_{\mathcal{C}^{\beta}})}.
$$
\item  By performing the similar calculation in part $(1)$, for $n>N$ we get
\begin{eqnarray*}\label{eq3}
    \sup_{n \in \mathbb{N} }\frac{ R(n,f^\alpha_{N,T}-f)}{2^{n\left(\frac{\log3}{\log2}-\beta\right)}}\leq \frac{3^{N}\left\|\alpha\right\|_{\infty}}{2^{N\left(\frac{\log3}{\log2}-\beta\right)}} \sup_{n \in \mathbb{N} }\frac{ R(n,f^\alpha_{N,T}-T f)}{2^{n\left(\frac{\log3}{\log2}-\beta\right)}}+\frac{\left\|f^{\alpha}_{N,T}-T f\right\|_{\infty}}{2^{N\left(\frac{\log3}{\log2}-\beta\right)}}\sum_{w\in \{1,2,3\}^{N}}\sup_{n \in \mathbb{N} }\frac{ R(n,\alpha_w)}{2^{n\left(\frac{\log3}{\log2}-\beta\right)}}.
\end{eqnarray*}
Thus, from the above inequality and from \eqref{eq0}, we obtain
\begin{eqnarray*}
    \left\|f\right\|_{\mathcal{C}^{\beta}}-\left\|f^\alpha_{N,T}\right\|_{\mathcal{C}^{\beta}}&\leq&\left\|f^\alpha_{N,T}-f\right\|_{\mathcal{C}^{\beta}}\\&\leq& 2^{N\beta}(\left\|\alpha\right\|_{\infty}+\left\|\alpha\right\|_{\mathcal{C}^{\beta}})\left\|f^\alpha_{N,T}-T f\right\|_{\mathcal{C}^{\beta}}\\
    &\leq& 2^{N\beta}(\left\|\alpha\right\|_{\infty}+\left\|\alpha\right\|_{\mathcal{C}^{\beta}})\left(\left\|f^\alpha_{N,T}\right\|_{\mathcal{C}^{\beta}}+\left\|T\right\|\left\|f\right\|_{\mathcal{C}^{\beta}}\right).
\end{eqnarray*}
Which implies that 
$$
\Big(1-2^{N\beta}(\left\|\alpha\right\|_{\infty}+\left\|\alpha\right\|_{\mathcal{C}^{\beta}})\left\|T\right\|\Big)\left\|f\right\|_{\mathcal{C}^{\beta}}\leq \Big(1+2^{N\beta}(\left\|\alpha\right\|_{\infty}+\left\|\alpha\right\|_{\mathcal{C}^{\beta}})\Big)\left\|f^\alpha_{N,T}\right\|_{\mathcal{C}^{\beta}}
$$
By using $(\left\|\alpha\right\|_{\infty}+\left\|\alpha\right\|_{\mathcal{C}^{\beta}})\|T\|<\frac{1}{2^{N\beta}},$ we get
$$
\left\|f\right\|_{\mathcal{C}^{\beta}}\leq \frac{1+2^{N\beta}(\left\|\alpha\right\|_{\infty}+\left\|\alpha\right\|_{\mathcal{C}^{\beta}})}{1-2^{N\beta}(\left\|\alpha\right\|_{\infty}+\left\|\alpha\right\|_{\mathcal{C}^{\beta}})\left\|T\right\|}\left\|f^\alpha_{N,T}\right\|_{\mathcal{C}^{\beta}}.
$$
This implies that $\mathcal{F}^\alpha_{N,T}$ is bounded below.
\item  We have
\begin{eqnarray*}
    \left\|f-\mathcal{F}^{\alpha}_{N,T}(f)\right\|_{\mathcal{C}^{\beta}}&\leq& 2^{N\beta}(\left\|\alpha\right\|_{\infty}+\left\|\alpha\right\|_{\mathcal{C}^{\beta}})\left\|\mathcal{F}^{\alpha}_{N,T}(f)-T f\right\|_{\mathcal{C}^{\beta}}\\&\leq&
    2^{N\beta}{(\left\|\alpha\right\|_{\infty}+\left\|\alpha\right\|_{\mathcal{C}^{\beta}})}\left\|\mathcal{F}^{\alpha}_{N,T}(f) \right\|_{\mathcal{C}^{\beta}}+2^{N\beta}(\left\|\alpha\right\|_{\infty}+\left\|\alpha\right\|_{\mathcal{C}^{\beta}})\left\|T\right\|\left\|f \right\|_{\mathcal{C}^{\beta}}.
\end{eqnarray*}
It follows from  $2^{N\beta}(\left\|\alpha\right\|_{\infty}+\left\|\alpha\right\|_{\mathcal{C}^{\beta}})\|T\|<1$  and Lemma \ref{Lemma1} that the fractal operator $\mathcal{F}^{\alpha}_{N,T}$
is a topological isomorphism and in particular we have 
$$
\frac{1-2^{N\beta}(\left\|\alpha\right\|_{\infty}+\left\|\alpha\right\|_{\mathcal{C}^{\beta}})}{1+2^{N\beta}(\left\|\alpha\right\|_{\infty}+\left\|\alpha\right\|_{\mathcal{C}^{\beta}})\|T\|}\leq\left\|\left(\mathcal{F}^\alpha_{N,T}\right)^{-1}\right\| \leq \frac{1+2^{N\beta}(\left\|\alpha\right\|_{\infty}+\left\|\alpha\right\|_{\mathcal{C}^{\beta}})}{1-2^{N\beta}(\left\|\alpha\right\|_{\infty}+\left\|\alpha\right\|_{\mathcal{C}^{\beta}})\|T\|}.
$$

\item By item \eqref{item1}, we deduce
$$
\left\|\mathcal{F}^{\alpha}_{N,T}(f)-f\right\|_{\mathcal{C}^{\beta}} \leq \frac{2^{N\beta}(\|\alpha\|_{\infty}+\left\|\alpha\right\|_{\mathcal{C}^{\beta}})}{1-2^{N\beta}(\left\|\alpha\right\|_{\infty}+\left\|\alpha\right\|_{\mathcal{C}^{\beta}})}\|f-T f\|_{\mathcal{C}^{\beta}} .
$$
For any fixed point $g^*$ of $T$, we get 
$$
\left\|\mathcal{F}^{\alpha}_{N,T}(g^*)-g^*\right\|_{\mathcal{C}^{\beta}} \leq 0,
$$
which yields $\mathcal{F}^{\alpha}_{N,T}(g^*)=g^*.$ Now for the other part, we know that $$\mathcal{F}^{\alpha}_{N,T}(f)(x)
  =f(x)+\alpha_w(L_w^{-1}(x))\mathcal{F}^{\alpha}_{N,T}(f)(L_w^{-1}(x))- \alpha_w(L_w^{-1}(x))b(L_w^{-1}(x)),$$
  for $\mathcal{F}^{\alpha}_{N,T}(h^*)
  =h^*$, we get
  $$\alpha_w(L_w^{-1}(x))(h^*- Th^*)(L_w^{-1}(x))=0~\text{for}~x\in L_{\omega}(SG),~\omega\in\{1,2,3\}^N.$$
Since $\alpha_{\omega}$ (by assumption at least one $\alpha_w$) is nowhere zero, we obtain
$$(h^*- Th^*)(L_w^{-1}(x))=0~\text{for}~x\in L_{\omega}(SG).$$
Equivalently,
$$(h^*- Th^*)(x)=0~\text{for}~x\in SG.$$
Hence $Th^*=h^*$, completes the proof.

\item Take $f_0\in\mathcal{C}^{\beta}(SG)$ such that $Tf_0=f_0~\text{and }\|f_0\|_{\mathcal{C}^{\beta}}=1$. Now by item \eqref{item5} of the following theorem $\mathcal{F}^{\alpha}_{N,T}(f_0)=f_0.$ Thus by the definition of the operator norm we get 
\begin{eqnarray*}
    \|\mathcal{F}^{\alpha}_{N,T}\|&\geq& \frac{\|\mathcal{F}^{\alpha}_{N,T}(f_0)\|_{\mathcal{C}^{\beta}}}{\|f_0\|_{\mathcal{C}^{\beta}}}=1.
\end{eqnarray*}

\item By \eqref{item4}, we know that $(\mathcal{F}^{\alpha}_{N,T})^{-1}$ exists and also a bounded linear operator. Suppose that $\mathcal{F}^{\alpha}_{N,T}$ is a compact operator. Thus, $I=\mathcal{F}^{\alpha}_{N,T}\circ(\mathcal{F}^{\alpha}_{N,T})^{-1}:\mathcal{C}^{\beta}(SG) \to \mathcal{C}^{\beta}(SG)$ is also a compact operator. But this contradicts the fact that $\mathcal{C}^{\beta}(SG)$ is an infinite dimensional space. Hence, $\mathcal{F}^{\alpha}_{N,T}$ is not a compact operator. 

\item For an invertible operator $T:X\to Y$, $T^{*}$ is also an invertible operator. Thus by combining it with \eqref{item4}, $({\mathcal{F}^{\alpha}_{N,T}})^{*}$ is also invertible. Hence, $\mathcal{F}^{\alpha}_{N,T}$ is Fredholm. Moreover, by \eqref{item4} we can conclude that 
$$\text{index}(\mathcal{F}^{\alpha}_{N,T}))=\dim(\text{Ker}(\mathcal{F}^{\alpha}_{N,T}))-\dim(\text{Ker}((\mathcal{F}^{\alpha}_{N,T})^*))=0.$$

\end{enumerate}
\end{proof}
\subsection{Energy space on SG}\label{Sec5}

Start with the vertex set $V_0$, and define $ \Lambda_0$ as the complete graph on $V_0$ (every pair of vertices is connected). Suppose we have construct the graph $\Lambda_{n-1}$ with vertex set $V_{n-1}$ for some $n \geq 1$. Then, we define the graph $\Lambda_n$ on $V_n$ as follows: for any $x,y \in V_n,$ we say $ x \sim_n y$ if and only if $x=L_i(x_1),y=L_i(y_1)$ with $x_1 \sim_{n-1} y_1$ and $i \in I$. In other words, $x \sim_n y$ if and only if there exists $i \in I^n$ such that $x,y \in L_i(V_0)$. For each level $n=0,1,2, \dots,$ define the graph energy of a function $f$ on $\mathcal{G}_n$ as
$$
\mathcal{E}_n(f):= \Big(\frac{5}{3} \Big)^n \sum_{x \sim_n y}(f(x)-f(y))^2.
$$
This energy measures how ``smooth" $f$ is on the graph. The energy at level $n-1$ is the smallest possible energy among all extensions of $f$ from $V_{n-1}$ to $V_n$, i.e.,
$$ 
\mathcal{E}_{n-1}(f)= \min \{ \mathcal{E}_n(\tilde{f}) : \tilde{f}|_{V_{n-1}}=f \}.
$$ 
As $n$ increases, the sequence $\{\mathcal{E}_{n}(f)\}$ is non-decreasing. The energy of $f$ on the entire vertex set $V_* :=\cup_{m=0}^{\infty}V_m$ is defined as
$$ 
\mathcal{E}(f):= \lim_{n \rightarrow \infty}\mathcal{E}_n(f).
$$ 
if $f \in \mathcal{C}(SG)$ and $\mathcal{E}(f) < \infty$, then we called $f$ has finite energy. Such functions are uniformly continuous on  $V_*$, and since $V_*$ is dense in $SG$, they extend uniquely to continuous functions on $SG$.

If a continuous function $f$ satisfies $ \mathcal{E}_{n-1}(f)=\mathcal{E}_n(f)$ for all $n \geq 1,$ then we call $f$ is a harmonic function on $SG$. 

Finally, define the energy space as
$$
\text{dom}(\mathcal{E})=\{g \in \mathcal{C}(SG):  \mathcal{E}(g)< \infty\}.
$$
From the reference \cite[Theorem $1.4.2$]{RS}, the set $\text{dom}(\mathcal{E})$, equipped with the norm  
$$ 
\|g\|_{\mathcal{E}}:=\|g\|_{\infty} + \sqrt{\mathcal{E}(g)},
$$
is a Banach space.
     
\par
Let $f \in \text{dom}(\mathcal{E})$ be a given function. Define a function $q_{w}: SG \to \mathbb{R}$ as 
$$
q_w(x)=f(L_w(x))-\alpha_w(x)b(x),
$$
where $b \in \text{dom}(\mathcal{E})$ is another function such that $b$ and $f$ agrees on vertex set $V_0$, i.e., $b|_{V_0}=f|_{V_0}$.
If we assume that $\|\alpha\|_{\mathcal{E}} < \frac{1}{2\sqrt{5^N }},$ then from \cite[Theorem 8]{Mverma1}, there exists a unique function $f^{\alpha}_{N,b} \in \text{dom}(\mathcal{E})$ such that:
\begin{itemize}
\item $f^{\alpha}_{N,b}|_{V_N}=f|_{V_N},$ and 
\item it satisfies the self-referential equation:
$$
f^{\alpha}_{N,b}(x)=f(x)+\alpha_w(L_w^{-1}(x))f^{\alpha}_{N,b}(L_w^{-1}(x))- \alpha_w(L_w^{-1}(x))b(L_w^{-1}(x)), 
$$
\end{itemize}
for all $x \in L_w(SG)$, and for all $w \in I^N$.

Now, a take bounded linear operator $T:\text{dom}(\mathcal{E}) \to \text{dom}(\mathcal{E}) $ such that $(Tg)|_{V_0}=g|_{V_0}$ for any function $g$. If we choose the base function $b=Tf$, then we define a map $\mathcal{F}^{\alpha}_{N,T}: \text{dom}(\mathcal{E}) \to \text{dom}(\mathcal{E})$ defined by
$$
\mathcal{F}^{\alpha}_{N,T}(f)=f^{\alpha}_{N,b}=f^{\alpha}_{N,T}
$$ 
will be a bounded linear operator. And following Navascu\'es's terminology, we call it a fractal operator on $\text{dom}(\mathcal{E})$.

Now, let us explore some important properties of this operator.

\begin{theorem}\label{thm33}
  Let $Id$ be the identity operator on $\text{dom}(\mathcal{E})$. Denote $$\|\alpha\|_{\infty}=\max \left\{\left\|\alpha_w\right\|_{\infty}: w\in I^N\right\}, \|\alpha\|_{\mathcal{E}}=\max \left\{\left\|\alpha_w\right\|_{\mathcal{E}}: w\in I^N\right\}$$ $$\text {and}~~~~ \mathcal{E}(\alpha)=\max\{\mathcal{E}(\alpha_{w}): w\in I^N \}.$$
If $\|\alpha\|_{\infty}^{2}+2\mathcal{E}(\alpha)<\frac{1}{5^N 4}$, then the following statements hold:
    \begin{enumerate}
        \item\label{item321}  Let $f \in \text{dom}(\mathcal{E}) $ be an arbitrary. Then the perturbation error is of the form:
$$
\left\|f^\alpha_{N,T}-f\right\|_{\mathcal{E} } \leq \max\{A,B\}\|f-T f\|_{\mathcal{E}},
$$
where 
$$A=\frac{\left\|\alpha\right\|_{\infty}}{1-\left\|\alpha\right\|_{\infty}}+\frac{(1-\left\|\alpha\right\|_{\infty})^{-1}2\sqrt{5^{N}}\sqrt{2{\mathcal{E}(\alpha)}}}{(\sqrt{1-5^{N} 4\left\|\alpha\right\|_{\infty}^2}-2\sqrt{5^{N}}\sqrt{2{\mathcal{E}(\alpha)}})}$$
and $$B=\frac{\sqrt{5^{N}}4\left\|\alpha\right\|_{\infty}}{\sqrt{1-5^{N} 4\left\|\alpha\right\|_{\infty}^2}-2\sqrt{5^{N}}\sqrt{2{\mathcal{E}(\alpha)}}}.$$
Note that, for $\|\alpha\|_{\mathcal{E}}=0$ or $T=I$, we have $\mathcal{F}^\alpha_{N,T}= Id.$
\item\label{item322} Under the uniform norm on $\text{dom}(\mathcal{E})$, the fractal operator $\mathcal{F}^\alpha_{N,T}$ is a bounded linear operator. Moreover, the operator norm holds
$$
\left\|\mathcal{F}^\alpha_{N,T}\right\| \leq 1+\max\{A,B\}\left\|Id-T \right\|.
$$
\item\label{item323} If $\left(\left\|\alpha\right\|_{\infty}+2\sqrt{2\cdot 5^{N}} \|\alpha\|_{\mathcal{E}}\right)<\|T\|^{-1}$, then $\mathcal{F}^\alpha_{N,T}$ is bounded below. In particular, $\mathcal{F}^\alpha_{N,T}$ is injective.
\item\label{item324} If $\left(\left\|\alpha\right\|_{\infty}+2\sqrt{2\cdot 5^{N}} \|\alpha\|_{\mathcal{E}}\right)<\min\{1,\|T\|^{-1}\}$ then the inverse of $\mathcal{F}^\alpha_{N,T}$ exists which is also bounded and therefore a topological automorphism. Furthermore,
$$
\frac{1-\left(\left\|\alpha\right\|_{\infty}+2\sqrt{2\cdot 5^{N}} \|\alpha\|_{\mathcal{E}}\right)}{1+\left(\left\|\alpha\right\|_{\infty}+2\sqrt{2\cdot 5^{N}} \|\alpha\|_{\mathcal{E}}\right)\|T\|}\leq\left\|\left(\mathcal{F}^\alpha_{N,T}\right)^{-1}\right\| \leq \frac{1+\left(\left\|\alpha\right\|_{\infty}+2\sqrt{2\cdot 5^{N}} \|\alpha\|_{\mathcal{E}}\right)}{1-\left(\left\|\alpha\right\|_{\infty}+2\sqrt{2\cdot 5^{N}} \|\alpha\|_{\mathcal{E}}\right)\|T\|}.
$$
\item\label{item325} If $\|\alpha\|_{\infty} \neq 0$, then the fixed points of $T$ are also the the fixed points of $\mathcal{F}^\alpha_{N,T}$. If at least one $\alpha_{\omega}$ is nowhere zero, then the fixed points of $\mathcal{F}^\alpha_{N,T}$ are also the fixed points of $T$.
\item\label{item326} If the point spectrum of $T$ contains $1$,  then $\left\|\mathcal{F}^\alpha_{N,T}\right\|_{\mathcal{E}}\geq 1$.
\item\label{item327} The fractal operator $\mathcal{F}^\alpha_{N,T}$ is not the compact operator in the case of $$\left(\left\|\alpha\right\|_{\infty}+2\sqrt{2\cdot 5^{N}} \|\alpha\|_{\mathcal{E}}\right)<\min\{1,\|T\|^{-1}\}.$$
\item\label{item328} If $\left(\left\|\alpha\right\|_{\infty}+2\sqrt{2\cdot 5^{N}} \|\alpha\|_{\mathcal{E}}\right)<\min\{1,\|T\|^{-1}\}$, then $\mathcal{F}^\alpha_{N,T}$ is Fredholm and its index is $0.$
\end{enumerate}
\end{theorem}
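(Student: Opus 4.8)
The plan is to obtain the Fredholm property as an essentially formal consequence of the invertibility already secured in item \eqref{item324}. Under the hypothesis $\left(\left\|\alpha\right\|_{\infty}+2\sqrt{2\cdot 5^{N}}\,\|\alpha\|_{\mathcal{E}}\right)<\min\{1,\|T\|^{-1}\}$, item \eqref{item324} guarantees that $\mathcal{F}^\alpha_{N,T}$ is a topological automorphism of $\text{dom}(\mathcal{E})$, i.e.\ a bounded linear bijection with bounded inverse. The only remaining work is to verify that any such invertible operator is Fredholm of index zero, which I would do exactly in parallel to the corresponding statement in Theorem \ref{thm32}.

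First I would recall the definition: an operator is Fredholm when it has finite-dimensional kernel, closed range, and finite-dimensional cokernel. For an invertible operator all three conditions are immediate. Injectivity gives $\text{Ker}(\mathcal{F}^\alpha_{N,T})=\{0\}$, so $\dim\text{Ker}(\mathcal{F}^\alpha_{N,T})=0$; surjectivity gives that the range equals all of $\text{dom}(\mathcal{E})$, which is trivially closed and of codimension zero. Next I would pass to the Banach-space adjoint: since a bounded operator on a Banach space is invertible precisely when its adjoint is invertible, the operator $(\mathcal{F}^\alpha_{N,T})^{*}$ is also invertible, whence $\text{Ker}\big((\mathcal{F}^\alpha_{N,T})^{*}\big)=\{0\}$. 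Combining the two kernel computations yields
$$
\text{index}(\mathcal{F}^\alpha_{N,T})=\dim\text{Ker}(\mathcal{F}^\alpha_{N,T})-\dim\text{Ker}\big((\mathcal{F}^\alpha_{N,T})^{*}\big)=0-0=0.
$$

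I do not expect any genuine obstacle in this item: the entire difficulty is front-loaded into item \eqref{item324}, whose quantitative two-sided inverse bounds—obtained through Lemma \ref{Lemma1}—are what establish the topological automorphism property in the first place. Once invertibility is available, Fredholmness together with the vanishing of the index is a standard and purely formal deduction, so the proof reduces to correctly invoking item \eqref{item324} and recording the adjoint argument above.
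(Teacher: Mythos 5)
Your treatment of item \eqref{item328} is correct and coincides exactly with the paper's own route: the paper defers this part of Theorem \ref{thm33} to the corresponding item of Theorem \ref{thm32}, which is proved precisely as you propose---invertibility from item \eqref{item324}, invertibility of the Banach-space adjoint, hence Fredholmness with $\text{index}(\mathcal{F}^{\alpha}_{N,T})=\dim(\text{Ker}(\mathcal{F}^{\alpha}_{N,T}))-\dim(\text{Ker}((\mathcal{F}^{\alpha}_{N,T})^{*}))=0-0=0$. The one caveat is scope: your proposal proves only item \eqref{item328} while taking item \eqref{item324} as already established, whereas the substantive work in the paper's proof of this theorem is the derivation of the energy-norm estimates in items \eqref{item321}--\eqref{item323} (in particular the bound $\left\|f-\mathcal{F}^{\alpha}_{N,T}(f)\right\|_{\mathcal{E}}\leq\left(\left\|\alpha\right\|_{\infty}+2\sqrt{2\cdot 5^{N}}\,\|\alpha\|_{\mathcal{E}}\right)\left\|\mathcal{F}^{\alpha}_{N,T}(f)-Tf\right\|_{\mathcal{E}}$ that feeds Lemma \ref{Lemma1} to yield item \eqref{item324}), none of which your write-up addresses.
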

\begin{proof}\noindent
    \begin{enumerate}
        \item Using the self-referential equation we have that
        $$
        f^{\alpha}_{N,b}(x)-f(x)=\alpha_w(L_w^{-1}(x))f^{\alpha}_{N,b}(L_w^{-1}(x))- \alpha_w(L_w^{-1}(x))b(L_w^{-1}(x))\; \forall\; x \in L_w(SG), w \in I^N.
        $$
From the definition we have  
$$
\left\|f^\alpha_{N,T}-f\right\|_{\mathcal{E}} \leq \|f^\alpha_{N,T}-f\|_{\infty} + \sqrt{\mathcal{E}(f^\alpha_{N,T}-f)}.
$$
It follows that 
\begin{eqnarray}
  \left|f^\alpha_{N,T}(x)-f(x)\right|
  &=&\left|\alpha_w(L_w^{-1}(x))f^{\alpha}_{N,T}(L_w^{-1}(x))- \alpha_w(L_w^{-1}(x))b(L_w^{-1}(x))\right|\nonumber\\
  &=& \left|\alpha_w(L_w^{-1}(x))\right|\left|f^{\alpha}_{N,T}(L_w^{-1}(x))-b(L_w^{-1}(x))\right|\nonumber\\
  &\leq& \left\|\alpha\right\|_{\infty}\left\|f^{\alpha}_{N,T}-b\right\|_{\infty},\label{eq00}
\end{eqnarray}
which gives that 
\begin{eqnarray*}
 \left\|f^\alpha_{N,T}-f\right\|_{\infty}&\leq&\left\|\alpha\right\|_{\infty}\left\|f^{\alpha}_{N,T}-b\right\|_{\infty}\\&\leq&\left\|\alpha\right\|_{\infty}\left\|f^{\alpha}_{N,T}-f\right\|_{\infty}+\left\|\alpha\right\|_{\infty}\left\|f-b\right\|_{\infty}.
\end{eqnarray*}
So, we have  
\begin{eqnarray}\label{eq11}
    \left\|f^\alpha_{N,T}-f\right\|_{\infty}\leq\frac{\left\|\alpha\right\|_{\infty}}{1-\left\|\alpha\right\|_{\infty}}\left\|f-T f\right\|_{\infty}.
\end{eqnarray}
For $m>N$, we have  
\begin{eqnarray*}
    &&{\mathcal{E}_{m}(f^\alpha_{N,T}-f)}\\&=&  \Big(\frac{5}{3} \Big)^m \sum_{x \sim_m y}\Big((f^\alpha_{N,T}-f)(x)-(f^\alpha_{N,T}-f)(y)\Big)^2\\
    &=&   \Big(\frac{5}{3} \Big)^m \sum_{x \sim_m y}\Big(\alpha_w(L_w^{-1}(x))(f^{\alpha}_{N,T}-b)(L_w^{-1}(x))- \alpha_w(L_w^{-1}(y))(f^{\alpha}_{N,T}-b)(L_w^{-1}(y))\Big)^2\\  \\&\leq&2\left\|\alpha\right\|_{\infty}^2  \Big(\frac{5}{3} \Big)^m \sum_{x \sim_m y}\Big((f^{\alpha}_{N,T}-b)(L_w^{-1}(x))-(f^{\alpha}_{N,T}-b)(L_w^{-1}(y))\Big)^2\\
&+& 2\left\|f^{\alpha}_{N,T}-b\right\|_{\infty}^2   \Big(\frac{5}{3} \Big)^m \sum_{x \sim_m y}\Big(\alpha_w(L_w^{-1}(x))-\alpha_w (L_w^{-1}(y))\Big)^2\\
    &\leq&2\left\|\alpha\right\|_{\infty}^2   \Big(\frac{5}{3} \Big)^m \sum_{x \sim_m y}\Big((f^{\alpha}_{N,T}- f+f-b)(L_w^{-1}(x))-(f^{\alpha}_{N,T}-f+f- b)(L_w^{-1}(y))\Big)^2\\
&+& 2\left\|f^{\alpha}_{N,T}-b\right\|_{\infty}^2   \Big(\frac{5}{3} \Big)^m \sum_{x \sim_m y}\Big(\alpha_w(L_w^{-1}(x))-\alpha_w (L_w^{-1}(y))\Big)^2\\
 &\leq&2\left\|\alpha\right\|_{\infty}^2   \Big(\frac{5}{3} \Big)^m \sum_{x \sim_m y}\Big(\big((f^{\alpha}_{N,T}- f)(L_w^{-1}(x))-(f^{\alpha}_{N,T}-f)(L_w^{-1}(y))\big)\\&&+\big((f- b)(L_w^{-1}(x))-(f-b)(L_w^{-1}(y))\big)\Big)^2\\&&+ 2\left\|f^{\alpha}_{N,T}-b\right\|_{\infty}^2 \Big(\frac{5}{3} \Big)^m \sum_{x \sim_m y}\Big(\alpha_w(L_w^{-1}(x))-\alpha_w (L_w^{-1}(y))\Big)^2\\
    &\leq& 5^{N} 4 \left\|\alpha\right\|_{\infty}^2 \left({\mathcal{E}_{m-N}(f^\alpha_{N,T}-f)}+ {\mathcal{E}_{m-N}(f-b)}\right)+5^{N} 2\left\|f^{\alpha}_{N,T}-b\right\|_{\infty}^2 {\mathcal{E}_{m-N}(\alpha_w)}.
\end{eqnarray*}
Thus, we can conclude that
\begin{eqnarray*}
    {\mathcal{E}(f^\alpha_{N,T}-f)}&\leq& \frac{5^{N} 4\left\|\alpha\right\|_{\infty}^2}{1-5^{N} 4\left\|\alpha\right\|_{\infty}^2}{\mathcal{E}(f-b)}+ \frac{5^{N} 2\left\|f^{\alpha}_{N,T}-b\right\|_{\infty}^2}{1-5^{N} 4\left\|\alpha\right\|_{\infty}^2} {\mathcal{E}(\alpha_w)}
\end{eqnarray*}
and
    \begin{eqnarray*}
    \sqrt{\mathcal{E}(f^\alpha_{N,T}-f)}&\leq& \frac{\sqrt{5^{N}}4\left\|\alpha\right\|_{\infty}}{\sqrt{1-5^{N}4\left\|\alpha\right\|_{\infty}^2}}\sqrt{\mathcal{E}(f-b)}+\frac{2\sqrt{5^{N}}\sqrt{2}\left\|f^{\alpha}_{N,T}-b\right\|_{\infty}}{\sqrt{1-5^{N} 4\left\|\alpha\right\|_{\infty}^2}} \sqrt{{\mathcal{E}(\alpha_w)}}.\\
\end{eqnarray*}
Which implies that 
\begin{eqnarray*}
    &&\left\|f^\alpha_{N,T}-f\right\|_{\mathcal{E}} \\& = & \|f^\alpha_{N,T}-f\|_{\infty} + \sqrt{\mathcal{E}(f^\alpha_{N,T}-f)}\\
    &\leq& \frac{\left\|\alpha\right\|_{\infty}}{1-\left\|\alpha\right\|_{\infty}}\left\|f-T f\right\|_{\infty}+\frac{\sqrt{5^{N}}4\left\|\alpha\right\|_{\infty}}{\sqrt{1-5^{N}4\left\|\alpha\right\|_{\infty}^2}}\sqrt{\mathcal{E}(f-T f)}+\frac{2\sqrt{5^{N}}\sqrt{2}\left\|f^{\alpha}_{N,T}-T f\right\|_{\infty}}{\sqrt{1-5^{N} 4\left\|\alpha\right\|_{\infty}^2}} \sqrt{{\mathcal{E}(\alpha_w)}}\\&\leq & 
   \left(\frac{\left\|\alpha\right\|_{\infty}}{1-\left\|\alpha\right\|_{\infty}}+\frac{2\sqrt{5^{N}}\sqrt{2 {\mathcal{E}(\alpha)}}}{\sqrt{1-5^{N}4\left\|\alpha\right\|_{\infty}^2}}\right)\left\|f-T f\right\|_{\infty}+\frac{\sqrt{5^{N}}4\left\|\alpha\right\|_{\infty}}{\sqrt{1-5^{N}4\left\|\alpha\right\|_{\infty}^2}}\sqrt{\mathcal{E}(f-T f)}\\&& +\frac{2\sqrt{5^{N}}\sqrt{2{\mathcal{E}(\alpha)}}}{\sqrt{1-5^{N} 4\left\|\alpha\right\|_{\infty}^2}} {\left\|f^{\alpha}_{N,T}- f\right\|_{\mathcal{E}}} 
\\&\leq& \left(1-\frac{2\sqrt{5^{N}}\sqrt{2{\mathcal{E}(\alpha)}}}{\sqrt{1-5^{N} 4\left\|\alpha\right\|_{\infty}^2}}\right)^{-1}\max\left\{\left(\frac{\left\|\alpha\right\|_{\infty}}{1-\left\|\alpha\right\|_{\infty}}+\frac{2\sqrt{5^{N}}\sqrt{2{\mathcal{E}(\alpha)}}}{\sqrt{1-5^{N}4\left\|\alpha\right\|_{\infty}^2}}\right), \frac{\sqrt{5^{N}}4\left\|\alpha\right\|_{\infty}}{\sqrt{1-5^{N}4\left\|\alpha\right\|_{\infty}^2}}\right\}\\&&\times \Big(\left\|f-T f\right\|_{\infty}+\sqrt{\mathcal{E}(f-T f)}\Big)\\
&=& \max\{A, B\}\|f-T f\|_{\mathcal{E}},
\end{eqnarray*}
where $$A=\frac{\left\|\alpha\right\|_{\infty}}{1-\left\|\alpha\right\|_{\infty}}+\frac{(1-\left\|\alpha\right\|_{\infty})^{-1}2\sqrt{5^{N}}\sqrt{2{\mathcal{E}(\alpha)}}}{\left(\sqrt{1-5^{N} 4\left\|\alpha\right\|_{\infty}^2}-2\sqrt{5^{N}}\sqrt{2{\mathcal{E}(\alpha)}}\right)}$$ and $$ B=\frac{\sqrt{5^{N}}4\left\|\alpha\right\|_{\infty}}{\sqrt{1-5^{N} 4\left\|\alpha\right\|_{\infty}^2}-2\sqrt{5^{N}}\sqrt{2{\mathcal{E}(\alpha)}}}.$$
\item By the first assertion, we know that
$$
\left\|f^\alpha_{N,T}\right\|_{\mathcal{E}}-\left\|f\right\|_{\mathcal{E}}\leq\left\|f^\alpha_{N,T}-f\right\|_{\mathcal{E}}\leq\max\{A,B\}\left\|f-T f\right\|_{\mathcal{E}}
$$
and
$$
\left\|\mathcal{F}^{\alpha}_{N,T}(f)\right\|_{\mathcal{E}}-\left\|f\right\|_{\mathcal{E}}\leq\left\|f^\alpha_{N,T}-f\right\|_{\mathcal{E}}\leq\max\{A,B\}\left\|f-T f\right\|_{\mathcal{E}}.
$$
It follows from this that
$$
\left\|\mathcal{F}^{\alpha}_{N,T}(f)\right\|_{\mathcal{E}}\leq\left[1+\max\{A,B\}\left\|Id-T \right\|\right]\left\|f \right\|_{\mathcal{E}},
$$
which implies that 
$$
\left\|\mathcal{F}^{\alpha}_{N,T}\right\|\leq1+\max\{A,B\}\left\|Id-T \right\|.
$$
\item Assume that $m>N$. Then, have 
\begin{eqnarray*}
     &&{\mathcal{E}_{m}(f^\alpha_{N,T}-f)}\\&=&  \Big(\frac{5}{3} \Big)^m \sum_{x \sim_m y}\Big((f^\alpha_{N,T}-f)(x)-(f^\alpha_{N,T}-f)(y)\Big)^2\\
    &=&   \Big(\frac{5}{3} \Big)^m \sum_{x \sim_m y}\Big(\alpha_w(L_w^{-1}(x))(f^{\alpha}_{N,T}-b)(L_w^{-1}(x))- \alpha_w(L_w^{-1}(y))(f^{\alpha}_{N,T}-b)(L_w^{-1}(y))\Big)^2\\  \\&\leq&2\left\|\alpha\right\|_{\infty}^2  \Big(\frac{5}{3} \Big)^m \sum_{x \sim_m y}\Big((f^{\alpha}_{N,T}-b)(L_w^{-1}(x))-(f^{\alpha}_{N,T}-b)(L_w^{-1}(y))\Big)^2\\
&+& 2\left\|f^{\alpha}_{N,T}-b\right\|_{\infty}^2   \Big(\frac{5}{3} \Big)^m \sum_{x \sim_m y}\Big(\alpha_w(L_w^{-1}(x))-\alpha_w (L_w^{-1}(y))\Big)^2\\&\leq & 2 \left\|\alpha\right\|_{\infty}^2 5^{N} \mathcal{E}_{m-N}(f^{\alpha}_{N,T}-b)+2 \left\|f^{\alpha}_{N,T}-b\right\|_{\infty}^2 5^{N} \mathcal{E}_{m-N}(\alpha_w)
.\end{eqnarray*}
This implies that 
$$\mathcal{E}(f^\alpha_{N,T}-f)\leq 2 \left\|\alpha\right\|_{\infty}^2 5^{N} \mathcal{E}(f^{\alpha}_{N,T}-b)+2 \left\|f^{\alpha}_{N,T}-b\right\|_{\infty}^2 5^{N} \mathcal{E}(\alpha_w).$$
Thus, we get
$$\sqrt{\mathcal{E}(f^\alpha_{N,T}-f)}\leq 2\sqrt{2\cdot 5^{N}} \left\|\alpha\right\|_{\infty}  \sqrt{\mathcal{E}(f^{\alpha}_{N,T}-T f)}+2\sqrt{2\cdot 5^{N} } \left\|f^{\alpha}_{N,T}-T f\right\|_{\infty} \sqrt{\mathcal{E}(\alpha)}.$$
It follows from \eqref{eq00} that
$$
\left\|f^\alpha_{N,T}-f\right\|_{\infty}\leq\left\|\alpha\right\|_{\infty}\left\|f^{\alpha}_{N,T}-T f\right\|_{\infty}.
$$
This implies that
\begin{eqnarray*}
    &&\left\|f\right\|_{\mathcal{E}}-\left\|f^\alpha_{N,T}\right\|_{\mathcal{E}}\\&\leq& \left\|f^\alpha_{N,T}-f\right\|_{\mathcal{E}}\\& = & \left\|f^\alpha_{N,T}-f\right\|_{\infty}+  \sqrt{\mathcal{E}(f^\alpha_{N,T}-f)}\\&\leq&
\left\|\alpha\right\|_{\infty}\left\|f^{\alpha}_{N,T}-T f\right\|_{\infty}+2\sqrt{2\cdot 5^{N}} \left\|\alpha\right\|_{\infty}  \sqrt{\mathcal{E}(f^{\alpha}_{N,T}-T f)}+2\sqrt{2\cdot 5^{N} } \left\|f^{\alpha}_{N,T}-T f\right\|_{\infty} \sqrt{\mathcal{E}(\alpha)}\\&=&
\bigg(\left\|\alpha\right\|_{\infty}+2\sqrt{2\cdot 5^{N} }\sqrt{\mathcal{E}(\alpha)}\bigg)\left\|f^{\alpha}_{N,T}-T f\right\|_{\infty}+2\sqrt{2\cdot 5^{N}} \left\|\alpha\right\|_{\infty}  \sqrt{\mathcal{E}(f^{\alpha}_{N,T}-T f)}
    \\&=& (\left\|\alpha\right\|_{\infty}+2\sqrt{2\cdot 5^{N}} \|\alpha\|_{\mathcal{E}})\left\|f^{\alpha}_{N,T}-T f\right\|_{\mathcal{E}}\\&\leq& 
(\left\|\alpha\right\|_{\infty}+2\sqrt{2\cdot 5^{N}} \|\alpha\|_{\mathcal{E}})\left(\left\|f^\alpha_{N,T}\right\|_{\mathcal{E}}+\left\|T\right\|\left\|f\right\|_{\mathcal{E}}\right)
\end{eqnarray*}
and then 
$$
\Big(1-(\left\|\alpha\right\|_{\infty}+2\sqrt{2\cdot 5^{N}} \|\alpha\|_{\mathcal{E}})\left\|T\right\|\Big)\left\|f\right\|_{\mathcal{E}}\leq \Big(1+(\left\|\alpha\right\|_{\infty}+2\sqrt{2\cdot 5^{N}} \|\alpha\|_{\mathcal{E}})\Big)\left\|f^\alpha_{N,T}\right\|_{\mathcal{E}}.
$$
It follows from $(\left\|\alpha\right\|_{\infty}+2\sqrt{2\cdot 5^{N}} \|\alpha\|_{\mathcal{E}})\|T\|<{1}$ that
$$
\left\|f\right\|_{\mathcal{E}}\leq \frac{1+(\left\|\alpha\right\|_{\infty}+2\sqrt{2\cdot 5^{N}} \|\alpha\|_{\mathcal{E}})}{1-(\left\|\alpha\right\|_{\infty}+2\sqrt{2\cdot 5^{N}} \|\alpha\|_{\mathcal{E}})\left\|T\right\|}\left\|f^\alpha_{N,T}\right\|_{\mathcal{E}}.
$$
The proof of assertions (\ref{item324}),(\ref{item325}),(\ref{item326}),(\ref{item327}),(\ref{item328}) will follow in similar way as in Theorem \ref{thm32} and is therefore
omitted.
    \end{enumerate}
\end{proof}

\begin{remark}
    Here we compare our result with \cite[Corollary 3.6]{SP}. If  $\alpha$ is a constant function, then in the proof of \cite{Mverma1}, using $(r+t)^2 \le 2 r^2 +2 t^2$, we get with the notation of \cite{Mverma1}:
    \begin{equation*}
   \begin{aligned} 
   &|\mathcal{T}f(x)-\mathcal{T}f(y)|^2  \le  ~2 \|\alpha\|_{\infty}^2 |f(L_w^{-1}(x))-f(L_w^{-1}(y))|^2+ 2|q_w(L_w^{-1}(x))-q_w(L_w^{-1}(y))|^2.
   \end{aligned}
   \end{equation*}
    So, finally, we obtain
$\| \alpha\|_{\infty} < \frac{1}{2\sqrt{5^N}}$ for the above theorem to hold. This improves the bound
obtained in the setting of the real-valued $\alpha$-fractal function on SG in \cite[Corollary 3.6]{SP}.
\end{remark}
\subsection{Graphs of some $\alpha$-fractal function}    
\begin{figure}[h!]
\begin{minipage}{0.5\textwidth}
\includegraphics[width=1.0\linewidth]{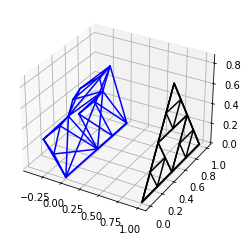}
{\hspace*{1cm}Graph of $f^{\alpha}$ on $2$nd level}
\end{minipage}\hspace*{0.3cm}
\begin{minipage}{0.5\textwidth}\includegraphics[width=1.0\linewidth]{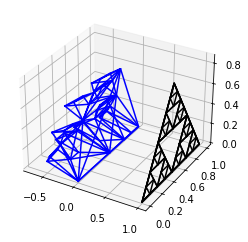}
{\hspace*{2cm}Graph of $f^{\alpha}$ on $3$rd level}
\end{minipage}
\begin{minipage}{0.5\textwidth}
\includegraphics[width=1.0\linewidth]{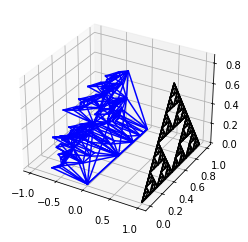}
{\hspace*{1cm}Graph of $f^{\alpha}$ on $4$th level}
\end{minipage}\hspace*{0.3cm}
\begin{minipage}{0.5\textwidth}
\includegraphics[width=1.0\linewidth]{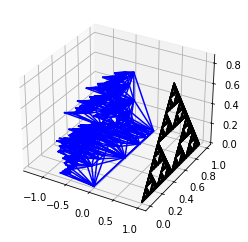}
{\hspace*{2cm}Graph of $f^{\alpha}$ on $5$th level}
\end{minipage}
\caption{$f(x,y)=\frac{y^2\sin{(x)}}{2}$, $b(x,y)=x(x-1)(y-\frac{\sqrt{3}}{2})\frac{y^2\sin{(0.5)}}{2}$ and   $\alpha=0.9$.}
\end{figure} 
\subsection{Lebesgue spaces on SG}
Next, we now define a function space called the $\mathcal{L}_q$ space on $SG$, which contains all the functions that are $q$-integrable functions on $SG$. 

First, recall that $V_0=\{p_1,p_2,p_3\}$ is the set of vertices of an equilateral triangle in the plane $\mathbb{R}^2$. For each $i=1,2,3,$ define the map 
$$
L_i(x)=\frac{1}{2}( x + p_i).
$$
The collection of these three maps forms an IFS $\mathcal{J}=\{\mathbb{R}^2; L_i, i=1,2,3\}$. The $SG$ is the unique compact set (called the attractor of the IFS) that satisfies:
$$
SG=\bigcup_{i=1}^{3}L_i(SG).
$$ 
And there exists a unique Borel probability measure $\nu_p$ supported on the $SG$ such that
$$
\nu_p=\sum_{i=1}^{3} p_i \nu_p\circ L_i^{-1},
$$
where $p=(p_1,p_2,p_3)$ be a probability vector with $\sum_{i=1}^{3} p_i =1$ and $p_i>0$.

Now, we define the space $\mathcal{L}_q(SG,\nu_p)$, which consists of all real-valued functions $f$ on $SG$ such that
$$
\int_{SG}|f(x)|^qd\nu_p(x) < \infty.
$$
The space $\mathcal{L}_q(SG,\nu_p)$ becomes the Banach space with the norm 
$$
\|f\|_{q}=\bigg(\int_{SG}|f(x)|^qd\nu_p(x)\bigg)^{\frac{1}{q}}.
$$
\begin{note}
For $p\ne p'$ (which yields $\nu_p \neq \nu_{p'}$), we can get 
\[
\text{neither}~ \mathcal{L}_q(SG,\nu_p) \subseteq \mathcal{L}_q(SG,\nu_{p'}) \text{ nor } \mathcal{L}_q(SG,\nu_{p'}) \subseteq \mathcal{L}_q(SG,\nu_{p}).
\]
For example:  Choose $p=(\tfrac{1}{3},\tfrac{1}{3},\tfrac{1}{3})$ and $p'=(\tfrac{1}{8},\tfrac{3}{4},\tfrac{1}{8})$ two probability vectors. Recall that the IFS $\{SG; L_1, L_2, L_3\}$ consisting of similarity mappings satisfies the open set condition. Then, using \cite[Theorem $2.2$]{Moran}, invariant measure $\nu_{p}$ corresponding to $p$ is $\nu_{p}=\mathcal{H}^{\frac{\log 3}{\log 2}}\vert_{SG}$. We know that $\dim_{H}(\nu_{p})=\dim_{H}(SG)=\displaystyle\frac{\log 3}{\log 2}$ and \[\dim_{H}(\nu_{p'})=\displaystyle\frac{ - \tfrac{1}{8} \log (8)+\tfrac{3}{4}\log (3/4)- \tfrac{1}{8} \log (8)}{- \tfrac{1}{8} \log (2)-\tfrac{3}{4}\log (2)- \tfrac{1}{8} \log (2)}= \displaystyle\frac{ \tfrac{3}{4}\log (\frac{3}{4}) - \frac{3}{4} \log (2)}{ - \log (2)}  \approx 1.0612781245 < \frac{\log 3}{\log 2}.\]
Therefore, for $\dim_{H}(\nu_{p'})<\gamma< \frac{\log 3}{\log 2}$ there exists a set $D_{\gamma} \subset SG$ such that $\dim_{H}D_{\gamma}=\gamma$ and $\nu_{p'}(D_{\gamma})>0$ but $\nu_{p}(D_{\gamma})=0$. Now define a map
\[
f^*(x)=\begin{cases}
\infty, & \text{if } x \in D_{\gamma}\\ \\
0, & \text{otherwise}.
\end{cases}
\]
  It is simple to prove that $f^*\in \mathcal{L}_q(SG,\nu_{p})$ but not in $\mathcal{L}_q(SG,\nu_{p'})$.


Now for other part, from \cite[Theorem $2.1$]{Moran}, $\nu_{p}$ and $\nu_{p'} $ are mutually singular measures for $p\ne p'$, that is, there exists a set $A \subset SG$ such that $\nu_{p} (A) =1, \nu_{p'} (A) =0$. Consider a partition $A= \cup_{n=1}^{\infty} A_n$, where $A_n$ are disjoint subsets of $A$ with $\nu_{p} (A_n) >0$.
Define, 
\[g^*(x)=\begin{cases}
   n^{\frac{1}{q}}, & \text{if } x \in A_n \\
   0, & \text{if } x \in SG \setminus A.
\end{cases}\]
Then, $\int_{SG} |g^*(x)|^q d\nu_p(x) = \sum_{n=1}^{\infty} n. \nu_{p}(A_n)= \infty$ and $\int_{SG} |g^*(x)|^q d\nu_{p'}(x) = \sum_{n=1}^{\infty} n. \nu_{p'}(A_n)= 0$.
Therefore, $g^*\notin \mathcal{L}_q(SG,\nu_p)$ but $g^*\in\mathcal{L}_q(SG,\nu_{p'})$. 
This completes our task.
\end{note}

\par

For a given $f \in \mathcal{L}_q(SG,\nu_p)$, if we consider $q_{w}: SG \to \mathbb{R}$ as 
$$q_w(x)=f(L_w(x))-\alpha_w(x)b(x),$$ where $b \in \mathcal{L}_q(SG,\nu_p)$ with  $b|_{V_0}=f|_{V_0}$. Following the idea of  \cite[Theorem 9]{Mverma1}, we prove our next result.

     \begin{theorem} Let $N\in \mathbb{N}$ and
     let $ b \in \mathcal{L}_q(SG)$ and $\alpha_w \in \mathcal{L}_{\infty}(SG)$ for $w\in I^n$ such that $\sum_{w \in I^N} p_w \|\alpha_w\|_{\mathcal{L}_{\infty}(SG)}^q <1$. Then there exists a unique $f^{\alpha}_{N,b} \in \mathcal{L}_q(SG)$ such that $f^{\alpha}_{N,b}|_{V_N}=f|_{V_N}$ and  satisfies the following self-referential equation:
$$
f^{\alpha}_{N,b}(x)=f(x)+\alpha_w(L_w^{-1}(x))f^{\alpha}_{N,b}(L_w^{-1}(x))- \alpha_w(L_w^{-1}(x))b(L_w^{-1}(x))\; \forall\; x \in L_w(SG), w \in I^N.
$$    
\end{theorem}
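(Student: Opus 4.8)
The plan is to realize $f^{\alpha}_{N,b}$ as the unique fixed point of a Read--Bajraktarević-type operator and then invoke the Banach fixed point theorem on the complete space $\mathcal{L}_q(SG,\nu_p)$. Define $\mathcal{T}:\mathcal{L}_q(SG,\nu_p)\to\mathcal{L}_q(SG,\nu_p)$ by
$$
(\mathcal{T}g)(x)=f(x)+\alpha_w(L_w^{-1}(x))\,g(L_w^{-1}(x))-\alpha_w(L_w^{-1}(x))\,b(L_w^{-1}(x)),\qquad x\in L_w(SG),\ w\in I^N.
$$
Since the cells $L_w(SG)$, $w\in I^N$, cover $SG$ and overlap only on the finite, hence $\nu_p$-null, set of junction vertices, this prescription specifies $\mathcal{T}g$ unambiguously as an element of $\mathcal{L}_q(SG,\nu_p)$. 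The first, routine, step is to check that $\mathcal{T}$ maps $\mathcal{L}_q$ into itself: writing $(\mathcal{T}g)(x)=f(x)+\alpha_w(L_w^{-1}(x))\,(g-b)(L_w^{-1}(x))$ on $L_w(SG)$ and using $f,b\in\mathcal{L}_q$ together with $\alpha_w\in\mathcal{L}_\infty$, integrability of $\mathcal{T}g$ follows from the same estimate used below, applied to $g-b$.

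The crux is the contraction estimate, whose key analytic ingredient is the self-similarity of the invariant measure iterated to level $N$: from $\nu_p=\sum_{i=1}^3 p_i\,\nu_p\circ L_i^{-1}$ one obtains, with $p_w:=p_{w_1}\cdots p_{w_N}$, the decomposition $\nu_p=\sum_{w\in I^N}p_w\,\nu_p\circ L_w^{-1}$, equivalently the change-of-variables identity $\int_{SG}H\,d\nu_p=\sum_{w\in I^N}p_w\int_{SG}(H\circ L_w)\,d\nu_p$ for integrable $H$. Applying this to $H=|\mathcal{T}g_1-\mathcal{T}g_2|^q$, and using that on each cell the difference collapses to $(\mathcal{T}g_1-\mathcal{T}g_2)(L_w(y))=\alpha_w(y)\,(g_1-g_2)(y)$, I get
$$
\|\mathcal{T}g_1-\mathcal{T}g_2\|_q^q=\sum_{w\in I^N}p_w\int_{SG}|\alpha_w(y)|^q\,|g_1(y)-g_2(y)|^q\,d\nu_p(y)\le\Big(\sum_{w\in I^N}p_w\|\alpha_w\|_{\mathcal{L}_\infty}^q\Big)\|g_1-g_2\|_q^q.
$$
The hypothesis $\sum_{w\in I^N}p_w\|\alpha_w\|_{\mathcal{L}_\infty(SG)}^q<1$ then makes $\mathcal{T}$ a contraction with factor $\big(\sum_{w}p_w\|\alpha_w\|_{\mathcal{L}_\infty}^q\big)^{1/q}<1$, so the Banach fixed point theorem yields a unique $f^{\alpha}_{N,b}\in\mathcal{L}_q(SG,\nu_p)$ with $\mathcal{T}f^{\alpha}_{N,b}=f^{\alpha}_{N,b}$, which is exactly the asserted self-referential equation.

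It remains to verify the interpolation condition $f^{\alpha}_{N,b}|_{V_N}=f|_{V_N}$, read off the canonical representative furnished by the self-referential equation. I would first treat $V_0$: each $p_i$ is the fixed point of $L_i$, hence $p_i=L_{i^N}(p_i)$ with $i^N=(i,\dots,i)\in I^N$, so evaluating the equation at $x=p_i$ and using $b|_{V_0}=f|_{V_0}$ gives $(1-\alpha_{i^N}(p_i))\big(f^{\alpha}_{N,b}(p_i)-f(p_i)\big)=0$; provided $\alpha_{i^N}(p_i)\neq 1$ (guaranteed by the natural smallness of the scalings) this forces $f^{\alpha}_{N,b}(p_i)=f(p_i)$. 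Bootstrapping, any $v\in V_N$ has the form $v=L_w(p_i)$ with $w\in I^N$ and $p_i\in V_0$, and substituting $x=v$ together with $f^{\alpha}_{N,b}(p_i)=f(p_i)=b(p_i)$ cancels the two $\alpha_w(p_i)$ terms, leaving $f^{\alpha}_{N,b}(v)=f(v)$. I expect the main obstacle to be the bookkeeping around the $\nu_p$-null junction set: one must ensure that the cell-wise definition of $\mathcal{T}$ is consistent $\nu_p$-almost everywhere and that the pointwise interpolation claim is read for the distinguished representative, since in $\mathcal{L}_q$ individual vertices carry no mass.
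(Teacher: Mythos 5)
Your core argument is the same as the paper's: both proofs define the Read--Bajraktarevi\'c operator $\mathcal{T}$, use the level-$N$ self-similarity $\nu_p=\sum_{w\in I^N}p_w\,\nu_p\circ L_w^{-1}$ as a change of variables to get the contraction factor $\bigl(\sum_{w\in I^N}p_w\|\alpha_w\|_{\mathcal{L}_\infty(SG)}^q\bigr)^{1/q}<1$, and conclude by the Banach fixed point theorem. The differences are in the framing. The paper does not run the iteration on all of $\mathcal{L}_q(SG,\nu_p)$: it introduces the affine subset $\mathcal{L}_q^*(SG,\nu_p)=\{g\in\mathcal{L}_q: g|_{V_0}=g_*|_{V_0}\}$, asserts it is closed (hence complete), and contracts there, so the vertex agreement is built into the domain rather than checked afterwards; you instead contract on the whole space and then try to recover the interpolation condition pointwise. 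Your extra step is where the trouble lies: evaluating the fixed-point equation at $p_i=L_{i^N}(p_i)$ gives $(1-\alpha_{i^N}(p_i))\bigl(f^{\alpha}_{N,b}(p_i)-f(p_i)\bigr)=0$, and you then need $\alpha_{i^N}(p_i)\neq 1$. This is \emph{not} implied by the stated hypothesis $\sum_{w\in I^N}p_w\|\alpha_w\|_{\mathcal{L}_\infty(SG)}^q<1$, which permits individual $\|\alpha_w\|_{\mathcal{L}_\infty(SG)}$ to exceed $1$ (e.g.\ $N=1$, $p=(1/3,1/3,1/3)$, one $\alpha_w$ large and the others small), so your parenthetical ``guaranteed by the natural smallness of the scalings'' imports an assumption the theorem does not make. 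Moreover, both the evaluation at vertices and the hypothesis $b|_{V_0}=f|_{V_0}$ involve pointwise values of $\mathcal{L}_q$- and $\mathcal{L}_\infty$-classes at $\nu_p$-null points, which you rightly flag but do not resolve. To be fair, this defect is inherited from the statement itself: the paper's own proof never verifies $f^{\alpha}_{N,b}|_{V_N}=f|_{V_N}$ either (and its subspace $\mathcal{L}_q^*$ suffers from the same representative-choice ambiguity), so your proposal is at least as complete as the published argument on the existence--uniqueness core, while your attempted interpolation step, as written, needs either the additional hypothesis $\|\alpha\|_{\mathcal{L}_\infty(SG)}<1$ or a formulation in terms of distinguished representatives to go through.
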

\begin{proof}
Let us define 
$$ \mathcal{L}_q^*(SG,\nu_p )= \{ f \in \mathcal{L}_q(SG,\nu_p ): f|_{V_0}=g_*|_{V_0}\}.
$$
which means the set of all functions in  $\mathcal{L}_q^*(SG,\nu_p )$ that agree with a fixed function $g_*$ on the set $V_0$.  It is easy to check that the set $\mathcal{L}_q^*(SG,\nu_p )$ is closed in $\mathcal{L}_q(SG,\nu_p).$ Since $(\mathcal{L}_q(SG,\nu_p),\|.\|_{q})$ is a Banach space, this implies that $\mathcal{L}_q^*(SG,\nu_p )$ is also complete with respect with respect to same norm $\|.\|_{q}$.

Now, we define the RB operator $\mathcal{T}: \mathcal{L}_q^*(SG,\nu_p ) \rightarrow \mathcal{L}_q^*(SG,\nu_p )$ as follows  
$$
(\mathcal{T}g)(x)= \alpha_w(L_w^{-1}(x))g(L_w^{-1}(x))+f(x)-\alpha_w(L_w^{-1}(x))b(L_w^{-1}(x)))\; \forall\; x \in L_w(SG), w \in I^n
$$
This operator $\mathcal{T}$ is well-defined.
Let $f, g \in \mathcal{L}_q^*(SG,\nu_p )$. Then
               \begin{equation*}
                    \begin{aligned}
                                \big|(\mathcal{T}f)(x) -(\mathcal{T}g)(x)\big| &= \Big|\alpha_w(L_w^{-1}(x))f(L_w^{-1}(x))-\alpha_w(L_w^{-1}(x))g(L_w^{-1}(x))\Big|\\
                                &= | \alpha_w(L_w^{-1}(x)) ~(f-g)(L_w^{-1}(x))|\\
                       &= |\alpha_w(L_w^{-1}(x))| ~|(f-g)(L_w^{-1}(x))|\\
                       &\le \|\alpha_w\|_{\mathcal{L}_{\infty}(SG)} |(f-g)(L_w^{-1}(x))|.
                    \end{aligned}
                    \end{equation*}
            This inequality holds for almost all (with respect to $\nu_p$) $x \in L_w(SG)$ and  $ \forall~~ w \in I^n.$ Now, we have
           \begin{align*}
             \int_{SG} \big|(\mathcal{T}f)(x) -(\mathcal{T}g)(x)\big|^q d\nu_p(x)
                                   & \le  \sum_{w \in I^n} \int_{L_w(SG)}  \|\alpha_w\|_{\mathcal{L}_{\infty}(SG)}^q|(f-g)(L_w^{-1}(x))|^q  d\nu_p(x) . \end{align*}
 By using the self-similarity of  measure $\nu_p=\sum_{\sigma\in I^N}p_\sigma\nu_p\circ L_\sigma^{-1}$ and \cite[Theorem 2.1]{Bandt}, we get          
           \begin{align*}
             \int_{SG} \big|&(\mathcal{T}f)(x) -(\mathcal{T}g)(x)\big|^q d\nu_p(x)
                                  \\ & \le  \sum_{w \in I^N} \int_{L_w(SG)} \|\alpha_w\|_{\mathcal{L}_{\infty}(SG)}^q|(f-g)(L_w^{-1}(x))|^q  d\bigg(\sum_{\sigma\in I^N}p_\sigma \nu_p\circ L_\sigma^{-1}(x)\bigg)             \\& = \sum_{w \in I^N} p_w \|\alpha\|_{\mathcal{L}_{\infty}(SG)}^q  \int_{L_w(SG)}|(f-g)(L_w^{-1}(x))|^q   d(\nu_p\circ L_w^{-1}(x))\\&=\sum_{w \in I^N} p_w \|\alpha_w\|_{\mathcal{L}_{\infty}(SG)}^q \int_{SG}|(f-g)(\tilde{x})|^q   d\nu_p(\tilde{x})
                                  \\
                             &=  \sum_{w \in I^N} p_w \|\alpha_w\|_{\mathcal{L}_{\infty}(SG)}^q \int_{SG}|(f-g)(\tilde{x})|^q   d\nu_p(\tilde{x}).
              \end{align*} 
       Thus, we obtain $$\|\mathcal{T}f-\mathcal{T}g\|_{q} \le \Big(\sum_{w \in I^N} p_w \|\alpha_w\|_{\mathcal{L}_{\infty}(SG)}^q\Big)^{\frac{1}{q}} \|f-g\|_{q}.$$ Since $\sum_{w \in I^N} p_w \|\alpha_w\|_{\mathcal{L}_{\infty}(SG)}^q  < 1$, we conclude that $\mathcal{T}$ is a contraction operator on $ \mathcal{L}_q^*(SG,\nu_p ).$ By the application of Banach contraction mapping principle,  $\mathcal{T}$ has a unique fixed point, namely $f^{\alpha}_{N,b} \in \mathcal{L}_q^*(SG,\nu_p)$. Therefore, we conclude that $f^{\alpha}_{N,b} \in \mathcal{L}_q(SG,\nu_p).$ This completes the proof.
       \end{proof}

Now, for a given bounded linear operator $T:\mathcal{L}_q(SG) \to \mathcal{L}_q(SG) $ such that $(Tg)|{V_0}=g|{V_0},$ if we consider the base function $b=Tf.$ Then a map $\mathcal{F}^{\alpha}_{N,T}: \mathcal{L}_q(SG) \to \mathcal{L}_q(SG)$ defined as $\mathcal{F}^{\alpha}_{N,T}(f)=f^{\alpha}_{N,b}=f^{\alpha}_{N,T}$ will be bounded linear operator. This map $\mathcal{F}^{\alpha}_{N,T}$, following Navascu\'es's terminology, we call a fractal operator on $\mathcal{L}_q(SG).$

Now we collect several properties of this operator.

\begin{theorem}
    Let $Id$ be the identity operator on $\mathcal{L}_q(SG,\nu_p)$. Denote $$\big\|\alpha\big\|_{\mathcal{L}_{\infty}(SG)}=\max \left\{\big\|\alpha_w\big\|_{\mathcal{L}_{\infty}(SG)}: w\in I^N\right\}.$$ If $\big\|\alpha\big\|_{\mathcal{L}_{\infty}(SG)}<1$, then the following statements hold:
    \begin{enumerate}
        \item\label{item361}  Let $f \in \mathcal{L}_q(SG,\nu_p) $ be an arbitrary. Then the perturbation error is of the form:
$$
\big\|f^\alpha_{N,T}-f\big\|_q\leq\frac{\big\|\alpha\big\|_{\mathcal{L}_{\infty}(SG)}}{1-\big\|\alpha\big\|_{\mathcal{L}_{\infty}(SG)}}\big\|f-Tf\big\|_q.
$$

Note that, for $\big\|\alpha\big\|_{\mathcal{L}_{\infty}(SG)}=0$ or $T=I$, we have $\mathcal{F}^\alpha_{N,T}= Id.$
\item\label{item362} Under the uniform norm on $\mathcal{L}_q(SG,\nu_p)$, the fractal operator $\mathcal{F}^\alpha_{N,T}$ is a bounded linear operator. Moreover, the operator norm holds
$$\big\|\mathcal{F}^\alpha_{N,T}\big\|\leq1+\frac{\big\|\alpha\big\|_{\mathcal{L}_{\infty}(SG)}\big\|I-T\big\|}{1-\big\|\alpha\big\|_{\mathcal{L}_{\infty}(SG)}}.$$
\item\label{item363} If $\|\alpha\|_{\mathcal{L}_{\infty}(SG)}<\|T\|^{-1}$ then $\mathcal{F}^\alpha_{N,T}$ is bounded below. In particular, $\mathcal{F}^\alpha_{N,T}$ is injective.
\item\label{item364} If $\|\alpha\|_{\mathcal{L}_{\infty}(SG)}<\|T\|^{-1}$ then the inverse of $\mathcal{F}^\alpha_{N,T}$ exists which is also bounded and therefore a topological automorphism. Furthermore,
$$
\frac{1-\|\alpha\|_{\mathcal{L}_{\infty}(SG)}}{1+\|\alpha\|_{\mathcal{L}_{\infty}(SG)}\|T\|}\leq\left\|\left(\mathcal{F}^\alpha_{N,T}\right)^{-1}\right\| \leq \frac{1+\|\alpha\|_{\mathcal{L}_{\infty}(SG)}}{1-\|\alpha\|_{\mathcal{L}_{\infty}(SG)}\|T\|}.
$$
\item\label{item365} If $\|\alpha\|_{\mathcal{L}_{\infty}(SG)}\neq 0$ then the fixed points of $T$ are also the the fixed points of $\mathcal{F}^\alpha_{N,T}$. If at least one $\alpha_{\omega}$ is nowhere zero, then the fixed points of $\mathcal{F}^\alpha_{N,T}$ are also the fixed points of $T$.
\item\label{item366} If the point spectrum of $T$ contains $1$ , then $\left\|\mathcal{F}^\alpha_{N,T}\right\|\geq 1$.
\item\label{item367} The fractal operator $\mathcal{F}^\alpha_{N,T}$ is not the compact operator in the case of $\|\alpha\|_{\mathcal{L}_{\infty}(SG)}<\|T\|^{-1}$.
\item\label{item368} If $\|\alpha\|_{\mathcal{L}_{\infty}(SG)}<\|T\|^{-1}$, then $\mathcal{F}^\alpha_{N,T}$ is Fredholm and its index is $0.$
\end{enumerate}
\end{theorem}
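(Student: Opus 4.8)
The plan is to follow the same architecture as the proofs of Theorem \ref{thm32} and Theorem \ref{thm33}, the only genuinely new ingredient being the perturbation estimate in item \eqref{item361}, which here rests on the self-similarity of the measure $\nu_p$ rather than on oscillation or energy sums. Once item \eqref{item361} is in place, items \eqref{item362}--\eqref{item368} follow from the same abstract manipulations used before, so the real work is the first item.

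First I would establish item \eqref{item361}. Starting from the self-referential equation with base function $b=Tf$, on each cell $L_w(SG)$ one has
$$ f^\alpha_{N,T}(x)-f(x)=\alpha_w(L_w^{-1}(x))\bigl(f^\alpha_{N,T}-Tf\bigr)(L_w^{-1}(x)). $$
Raising this to the $q$-th power, bounding $|\alpha_w|\le\|\alpha\|_{\mathcal{L}_\infty(SG)}$, decomposing $\int_{SG}=\sum_{w\in I^N}\int_{L_w(SG)}$, and invoking the self-similarity $\nu_p=\sum_{w}p_w\,\nu_p\circ L_w^{-1}$ together with the change of variables $\tilde x=L_w^{-1}(x)$ (exactly as in the contraction estimate for $\mathcal{T}$ proved above) yields
$$ \int_{SG}|f^\alpha_{N,T}-f|^q\,d\nu_p\le\|\alpha\|_{\mathcal{L}_\infty(SG)}^q\Bigl(\sum_{w\in I^N}p_w\Bigr)\int_{SG}|f^\alpha_{N,T}-Tf|^q\,d\nu_p. $$
Since $\sum_w p_w=1$, taking $q$-th roots gives $\|f^\alpha_{N,T}-f\|_q\le\|\alpha\|_{\mathcal{L}_\infty(SG)}\|f^\alpha_{N,T}-Tf\|_q$; inserting $\|f^\alpha_{N,T}-Tf\|_q\le\|f^\alpha_{N,T}-f\|_q+\|f-Tf\|_q$ and rearranging, which is legitimate because $\|\alpha\|_{\mathcal{L}_\infty(SG)}<1$, produces the claimed bound.

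With this estimate the remaining items are routine and mirror Theorem \ref{thm32}. Item \eqref{item362} follows from $\|\mathcal{F}^\alpha_{N,T}(f)\|_q-\|f\|_q\le\|f^\alpha_{N,T}-f\|_q$ together with $\|f-Tf\|_q\le\|I-T\|\,\|f\|_q$. For item \eqref{item363}, the chain $\|f\|_q-\|f^\alpha_{N,T}\|_q\le\|\alpha\|_{\mathcal{L}_\infty(SG)}\bigl(\|f^\alpha_{N,T}\|_q+\|T\|\,\|f\|_q\bigr)$ rearranges, under $\|\alpha\|_{\mathcal{L}_\infty(SG)}\|T\|<1$, into a lower bound for $\|f^\alpha_{N,T}\|_q$, giving bounded below and hence injective. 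Item \eqref{item364} applies Lemma \ref{Lemma1} with $\lambda_1=\|\alpha\|_{\mathcal{L}_\infty(SG)}\|T\|$ and $\lambda_2=\|\alpha\|_{\mathcal{L}_\infty(SG)}$, both in $[0,1)$ under the stated hypothesis, which delivers the two-sided bound on $\|(\mathcal{F}^\alpha_{N,T})^{-1}\|$. In item \eqref{item365} the forward direction is immediate from item \eqref{item361} (a fixed point of $T$ makes the right-hand side vanish), and the converse feeds $\mathcal{F}^\alpha_{N,T}(h^*)=h^*$ into the self-referential equation to obtain $\alpha_w\cdot(h^*-Th^*)\circ L_w^{-1}=0$ and divides by a nowhere-zero $\alpha_w$. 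Items \eqref{item366}--\eqref{item368} are the usual consequences: a unit-norm fixed vector forces $\|\mathcal{F}^\alpha_{N,T}\|\ge1$; invertibility from item \eqref{item364} precludes compactness in the infinite-dimensional space $\mathcal{L}_q(SG,\nu_p)$; and an invertible operator is Fredholm of index $0$.

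I expect the only place demanding care to be item \eqref{item361}, where one must justify that the cells $L_w(SG)$ overlap only on the $\nu_p$-null vertex set so the integral splits as a sum, and that the measure change of variables is valid; and, in item \eqref{item365}, interpret ``nowhere zero'' in the $\nu_p$-almost-everywhere sense appropriate to $\mathcal{L}_q$. Beyond this bookkeeping I anticipate no real obstruction, since the single-integral structure of the $\mathcal{L}_q$ norm actually makes the argument cleaner than the oscillation- and energy-norm cases, where the norm splits into a supremum part and a seminorm part.
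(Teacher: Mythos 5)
Your proposal is correct and follows essentially the same route as the paper: item \eqref{item361} is obtained by the identical argument (cell decomposition of the integral, self-similarity of $\nu_p$ with change of variables, then Minkowski's inequality and rearrangement under $\|\alpha\|_{\mathcal{L}_{\infty}(SG)}<1$), and items \eqref{item362}--\eqref{item368} are handled exactly as the paper handles them, namely by repeating the norm manipulations of Theorems \ref{thm32} and \ref{thm33}, including the application of Lemma \ref{Lemma1} with $\lambda_1=\|\alpha\|_{\mathcal{L}_{\infty}(SG)}\|T\|$ and $\lambda_2=\|\alpha\|_{\mathcal{L}_{\infty}(SG)}$ for item \eqref{item364}. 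No gaps beyond the measure-theoretic bookkeeping you already flag, which the paper addresses by citing the overlap result of Bandt et al.
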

\begin{proof}
\begin{enumerate}
    \item It follows from the self-referential equation that
    $$f^\alpha_{N,T}(x)=f(x)
          +\alpha_w(L_w^{-1}(x))f^{\alpha}_{N,T}(L_w^{-1}(x))- \alpha_w(L_w^{-1}(x))b(L_w^{-1}(x))$$
    We have
    \begin{eqnarray*}
        \left|f^\alpha_{N,T}(x)-f(x)\right|^q
          &=&\left|\alpha_w(L_w^{-1}(x))f^{\alpha}_{N,T}(L_w^{-1}(x))- \alpha_w(L_w^{-1}(x))b(L_w^{-1}(x))\right|^q\\
          &\leq& \|\alpha_w\|_{\mathcal{L}_{\infty}(SG)}^q\left|(f^{\alpha}_{N,T}-b)(L_w^{-1}(x))\right|^q\\
    \end{eqnarray*}
    and
    \begin{align*}
        \int_{SG} \big|&f^\alpha_{N,T}(x)-f(x)\big|^q d\nu_p(x)
           \\ & \le \|\alpha\|_{\mathcal{L}_{\infty}(SG)}^q \sum_{w \in I^N} \int_{L_w(SG)}|(f^{\alpha}_{N,T}-b)(L_w^{-1}(x))|^q  d\bigg(\sum_{\sigma\in I^n}p_\sigma \nu_p\circ L_\sigma^{-1}(x)\bigg) 
           \\& = \|\alpha\|_{\mathcal{L}_{\infty}(SG)}^q  \sum_{w \in I^N} p_w \int_{L_w(SG)}|(f^{\alpha}_{N,T}-b)(L_w^{-1}(x))|^q   d(\nu_p\circ L_w^{-1}(x))\\&=\|\alpha\|_{\mathcal{L}_{\infty}(SG)}^q \sum_{w \in I^N} p_w \int_{SG}|(f^{\alpha}_{N,T}-b)(\tilde{x})|^q   d\nu_p(\tilde{x})
                  \\
            &=  \|\alpha\|_{\mathcal{L}_{\infty}(SG)}^q \int_{SG}|(f^{\alpha}_{N,T}-b)(\tilde{x})|^q   d\nu_p(\tilde{x}).
    \end{align*}
    So
    \begin{eqnarray*}
        \big\|f^\alpha_{N,T}-f\big\|_q&\leq& \big\|\alpha\big\|_{\mathcal{L}_{\infty}(SG)} \big\|(f^{\alpha}_{N,T}-b)\big\|_q \\
        &\leq& \big\|\alpha\big\|_{\mathcal{L}_{\infty}(SG)} \big\|(f^{\alpha}_{N,T}-f)+(f-b)\big\|_q\\ 
         &\leq& \big\|\alpha\big\|_{\mathcal{L}_{\infty}(SG)} \big\|f^{\alpha}_{N,T}-f\big\|_q+\big\|\alpha\big\|_{\mathcal{L}_{\infty}(SG)}\big\|f-b\big\|_q, 
    \end{eqnarray*}
    by Minkowski inequality.
    Hence 
    $$\big\|f^\alpha_{N,T}-f\big\|_q\leq\frac{\big\|\alpha\big\|_{\mathcal{L}_{\infty}(SG)}}{1-\big\|\alpha\big\|_{\mathcal{L}_{\infty}(SG)}}\big\|f-Tf\big\|_q.$$
    \item By the first assertion, we know
    $$\big\|f^\alpha_{N,T}\big\|_q-\big\|f\big\|_q\leq\big\|f^\alpha_{N,T}-f\big\|_q\leq\frac{\big\|\alpha\big\|_{\mathcal{L}_{\infty}(SG)}}{1-\big\|\alpha\big\|_{\mathcal{L}_{\infty}(SG)}}\big\|f-Tf\big\|_q.$$
    Therefore
    $$\big\|\mathcal{F}^\alpha_{N,T}(f)\big\|_q\leq\Bigg(1+\frac{\big\|\alpha\big\|_{\mathcal{L}_{\infty}(SG)}\big\|I-T\big\|}{1-\big\|\alpha\big\|_{\mathcal{L}_{\infty}(SG)}}\Bigg)\big\|f\big\|_q.$$
    Hence
    $$\big\|\mathcal{F}^\alpha_{N,T}\big\|_q\leq1+\frac{\big\|\alpha\big\|_{\mathcal{L}_{\infty}(SG)}\big\|I-T\big\|}{1-\big\|\alpha\big\|_{\mathcal{L}_{\infty}(SG)}}.$$
   The arguments supporting claims (\ref{item363}), (\ref{item364}), (\ref{item365}), (\ref{item366}), (\ref{item367}), and (\ref{item368}) can be derived using a similar approach as demonstrated in Theorem \ref{thm33}.
\end{enumerate}
\end{proof}

\section{Concluding remarks and possible future aspects}
We have studied several important properties of the fractal operator defined on function spaces. In the future, we plan to explore and describe isometries and surjective isometries on these spaces. We also aim to approximate quantities such as the spectral radius, numerical radius, and numerical range of these fractal operators. In addition to studying isometries of fractal operators, we also plan to explore other properties of the oscillation space, energy space, and Lebesgue spaces, such as whether they are separable, algebraically reflexive, and more, especially in the context of Banach spaces.

\bibliographystyle{amsplain}

\end{document}